\documentclass[11pt,reqno]{amsart}
\usepackage{hyperref}
\usepackage{url}
\usepackage{amssymb}
\usepackage{diagbox}
\usepackage[applemac]{inputenc} 
\usepackage{array}
\vfuzz2pt 

\setlength{\oddsidemargin}{0.2in}
\setlength{\evensidemargin}{0.2in}
\setlength{\textwidth}{5.7in}
\setlength{\textheight}{8.8in}

\newtheorem{theorem}{Theorem}

\newtheorem{lemma}[theorem]{Lemma}

\theoremstyle{definition}

\theoremstyle{remark}
\newtheorem{rem}{Remark}
\numberwithin{equation}{section}
\numberwithin{theorem}{section}
\numberwithin{conj}{section}
\numberwithin{defn}{section}

\allowdisplaybreaks[4]
\begin{document}

\title[Congruences for 3- and 9-Colored Generalized Frobenius Partitions]
 {Congruences Modulo Powers of 3 for 3- and 9-Colored Generalized Frobenius Partitions}

\author{Liuquan Wang}
\address{School of Mathematics and Statistics, Wuhan University, Wuhan 430072, Hubei, People's Republic of China}
\email{wanglq@whu.edu.cn; mathlqwang@163.com}

\subjclass[2010]{Primary 05A17; Secondary 11P83}

\keywords{Congruences; generalized Frobenius patitions; modulo powers of 3}

\dedicatory{}

\maketitle

\begin{abstract}
Let $c\phi_{k}(n)$ be the number of $k$-colored generalized Frobenius partitions of $n$. We establish some infinite families of congruences for $c\phi_{3}(n)$ and $c\phi_{9}(n)$ modulo arbitrary powers of 3, which refine the results of Kolitsch. For example, for $k\ge 3$ and $n\ge 0$, we prove that
\[c\phi_{3}\Big(3^{2k}n+\frac{7\cdot 3^{2k}+1}{8}\Big) \equiv 0 \pmod{3^{4k+5}}.\]
We give two different proofs to the congruences satisfied by $c\phi_{9}(n)$.  One of the proofs uses an relation between $c\phi_{9}(n)$ and $c\phi_{3}(n)$ due to Kolitsch, for which we provide a new proof in this paper.
\end{abstract}

\section{Introduction}
In 1984, Andrews \cite{Andrews} introduced the concept of $k$-colored generalized Frobenius partitions.
 We first color all the nonnegative integers using ``colors'' denoted by $1,2,\cdots, k$. Then we impose an ordering on these colored integers as follows:
\begin{equation}\label{new-ordering}0_1\prec 0_2\prec \cdots \prec 0_k\prec 1_1\prec 1_2\prec \cdots \prec 1_k \prec 2_1\prec 2_2\prec \cdots \prec 2_k\prec \cdots.\end{equation}
Here ``$\prec$'' is used to differentiate the inequality from the usual inequality ``$<$''. A $k$-colored generalized Frobenius partition
is a two-row array of colored integers of the form
\begin{align*}
\begin{pmatrix}
a_{1},a_{2},\cdots, a_{r} \\
b_{1},b_{2},\cdots, b_{r}
\end{pmatrix}
\end{align*}
such that
\begin{align}
a_r \prec a_{r-1}\prec \cdots \prec a_1, \quad b_{r}\prec b_{r-1}\prec \cdots \prec b_1.
\end{align}
The number being partitioned by this partition is
\[n=r+\sum_{i=1}^{r}(a_i+b_i).\]

For any positive integer $k$, Andrews \cite{Andrews} used the symbol $c\phi_{k}(n)$ to denote the number of $k$-colored generalized Frobenius partitions of $n$. Let
\begin{align*}
\mathrm{C}\Phi_{k}(q):=\sum_{n=0}^{\infty}c\phi_{k}(n)q^n.
\end{align*}
Andrews \cite{Andrews} proved that
\begin{align}
\mathrm{C}\Phi_{k}(q)=\frac{1}{(q;q)_{\infty}^k}\sum\limits_{m_1, \cdots, m_{k-1}=-\infty}^{\infty}{q^{Q(m_1,\cdots, m_{k-1})}},
\end{align}
where
\[Q(m_1,\cdots, m_{k-1})=\sum\limits_{i=1}^{k-1}{m_{i}^2}+\sum\limits_{1\le i <j \le k-1}{m_im_j}\]
and
\[(a;q)_{\infty}=\prod\limits_{n=0}^{\infty}(1-aq^n), \quad |q|<1.\]

To investigate arithmetic properties of $c\phi_{k}(n)$, Andrews \cite{Andrews} obtained alternative representations for $\mathrm{C}\Phi_{k}(q)$ with $k\in \{2, 3, 5\}$. In particular, for $k=3$, he \cite[Eq.\ (9.4)]{Andrews} proved that
\begin{align}\label{gen}
\sum\limits_{n=0}^{\infty}{c\phi_{3}(n)q^n}&=\frac{1}{(q;q)_{\infty}^{3}}\left(1+6\sum_{n=0}^{\infty}\left(\frac{q^{3n+1}}{1-q^{3n+1}}-\frac{q^{3n+2}}{1-q^{3n+2}} \right)\right).
\end{align}
From the formulas of $\mathrm{C}\Phi_{k}(q)$, Andrews \cite{Andrews} found some nice properties of $c\phi_{k}(n)$. For instance, he proved that for $n\ge 0$,
\begin{align}\label{cphi-2-mod5}
c\phi_{2}(5n+3) \equiv 0 \pmod{5}.
\end{align}
Since then, many congruences satisfied by $c\phi_{k}(n)$ have been discovered. Sellers \cite{Sellers} conjectured that \eqref{cphi-2-mod5} can be extended to a congruence modulo arbitrary powers of 5. Namely, for any integers $k \ge 1$ and $n\ge 0$, he conjectured that
\begin{align}
c\phi_{2}\left(5^{k}n+\lambda_{k} \right)\equiv 0 \pmod{5^k},
\end{align}
where $\lambda_{k}$ is the reciprocal of 12 modulo $5^{k}$. This conjecture was later proved by Paule and Radu \cite{Paule} using the theory of modular forms.

After the work of Andrews, Kolitsch \cite{Kolitsch,Kolitsch1989} introduced the function $\overline{c\phi}_{k}(n)$, which denotes the number of $k$-colored generalized Frobenius partitions of $n$ whose order is $k$ under cyclic permutation of the colors. He \cite{Kolitsch1989} proved that for any positive integer $m$,
\begin{align}\label{bar-general}
\overline{c\phi}_{m}(n)=\sum_{d|(m,n)}\mu(d)c\phi_{\frac{m}{d}}\left(\frac{n}{d} \right),
\end{align}
where $\mu(x)$ is the M\"obius function. In particular, when $m$ is a prime, we have
\begin{align}\label{bar-relate}
\overline{c\phi}_{m}(n)=c\phi_{m}(n)-p(\frac{n}{m}),
\end{align}
where $p(n)$ is the ordinary partition function and we agree that $p(x)=0$ when $x$ is not an integer.
Let $t_{k}$ be the reciprocal of 8 modulo $3^{k}$. Kolitsch \cite{Kolitsch-1} established the following infinite families of congruences: for $k \ge 1$ and $n\ge 0$,
\begin{align}\label{barcong}
\overline{c\phi}_{3}(3^{k}n+t_{k}) \equiv 0
\left\{\begin{array}{ll}
\pmod{3^{2k+2}} & \textrm{if $k$ is even},\\
\pmod{3^{2k+1}} & \textrm{if $k$ is odd}.
\end{array}\right.
\end{align}
From \eqref{bar-relate} we see that $c\phi_{3}(n)=\overline{c\phi}_{3}(n)$ if $n$ is not divisible by 3. Thus \eqref{barcong} implies that for $k\ge 1$ and $n\ge 0$,
\begin{align}
c\phi_{3}\Big(3^{2k-1}n+\frac{5\cdot 3^{2k-1}+1}{8}\Big) &\equiv 0 \pmod{3^{4k-1}}, \label{cphi3-cong1}\\
c\phi_{3}\Big(3^{2k}n+\frac{7\cdot 3^{2k}+1}{8} \Big) &\equiv 0 \pmod{3^{4k+2}}. \label{cphi3-cong2}
\end{align}
In 1996, by using some combinatorial arguments, Kolitsch \cite[Theorem 2]{Kolitsch-1996} proved that for any $n\ge 1$,
\begin{align}\label{cphi-9-bar-relation}
\overline{c\phi}_{9}(n)=3\overline{c\phi}_{3}(3n-1).
\end{align}
Using \eqref{bar-general}, this relation is equivalent to
\begin{align}\label{cphi-9-relate}
c\phi_{9}(n)=3c\phi_{3}(3n-1)+c\phi_{3}\left(\frac{n}{3} \right).
\end{align}
From \eqref{cphi3-cong1} and \eqref{cphi3-cong2}, Kolitsch \cite[Corollary 3]{Kolitsch-1996} used \eqref{cphi-9-bar-relation} to deduce the following congruences for $c\phi_{9}(n)$: for $k\ge 1$ and $n\ge 0$,
\begin{align}
c\phi_{9}\left(3^{2k}n+\frac{5\cdot 3^{2k}+3}{8} \right) &\equiv 0 \pmod{3^{4k-1}}, \label{cphi9-cong1}\\
c\phi_{9}\left(3^{2k+1}n+\frac{7\cdot 3^{2k+1}+3}{8} \right) &\equiv 0 \pmod{3^{4k+2}}.  \label{cphi9-cong2}
\end{align}

In 2011 and 2015, Baruah and Sarmah \cite{Baruah,Baruah-1}  found new representations of $\mathrm{C}\Phi_{k}(q)$ for $k\in \{4,5,6\}$. Moreover, they proved some congruences such as
\begin{align}
c\phi_{6}(3n+1) &\equiv 0 \pmod{9}, \label{cphi6-Baruah-1}\\
c\phi_{6}(3n+2) &\equiv 0 \pmod{9}. \label{cphi6-Baruah-2}
\end{align}
In 2016, Gu, Wang and Xia \cite{GuWangXia} found many congruences modulo powers of 3 for $c\phi_{6}(n)$. For example, for any integer $n\ge 0$, we have that
\begin{align}
c\phi_{6}(27n+16) &\equiv 0 \pmod{3^{5}}, \label{cphi6-1} \\
c\phi_{6}(243n+142)&\equiv 0 \pmod{3^6}. \label{cphi6-2}
\end{align}

Recently, Chan, Wang and Yang \cite{CWY-2} used the theory of modular forms to give many new representations for $\mathrm{C}\Phi_{k}(q)$  for $k\le 17$.  In particular, a $q$-product representation for $\mathrm{C}\Phi_{9}(q)$ was discovered for the first time:
\begin{align}\label{cphi9-gen}
\mathrm{C}\Phi_{9}(q)=\frac{E_{1}^3}{E_{3}^4}-240q\frac{E_{9}^3}{E_{3}^4}+324q\frac{E_{3}^8}{E_{1}^9}-1458q^2\frac{E_{9}^6}{E_{1}^3E_{3}^4}+19683q^4\frac{E_{9}^{12}}{E_{1}^9E_{3}^4},
\end{align}
where we denote
\[E_{k}=(q^k;q^k)_{\infty}, \quad k \in \mathbb{N}\]
for convenience. This representation leads to the congruences \cite[Theorem 5.2]{CWY-2}:
\begin{align}
c\phi_{9}(9n+3) &\equiv 0 \pmod{9}, \label{odd-cong1}\\
c\phi_{9}(9n+6) &\equiv 0 \pmod{9}, \label{odd-cong2}\\
c\phi_{9}(3n+1) &\equiv 0 \pmod{81}, \label{odd-cong3} \\
c\phi_{9}(3n+2) &\equiv 0 \pmod{729}. \label{odd-cong4}
\end{align}
For more results on $k$-colored generalized Frobenius partitions, see \cite{Baruah,Baruah-1}, \cite{CWY}--\cite{Sellers-2},
    \cite{Xia-0}--\cite{Zhang}.

Following their steps, we are going to present more congruences modulo powers of 3 for $c\phi_{3}(n)$ and $c\phi_{9}(n)$. Observe that the cases $k=1$ and 2 of \eqref{cphi3-cong1} and \eqref{cphi3-cong2} give the following congruences:
\begin{align}
c\phi_{3}(3n+2) &\equiv 0 \pmod{3^3}, \label{cong-1}\\
c\phi_{3}(9n+8) &\equiv 0 \pmod{3^6}, \label{cong-2}\\
c\phi_{3}(27n+17) &\equiv 0 \pmod{3^7}. \label{cong-3}
\end{align}
However, numerical evidences suggest that congruences \eqref{cong-2} and \eqref{cong-3} can be improved to
\begin{align}
c\phi_{3}(9n+8) &\equiv 0 \pmod{3^7}, \label{newcong-2}\\
c\phi_{3}(27n+17) &\equiv 0 \pmod{3^8}. \label{newcong-3}
\end{align}
Moreover, there are some congruences which are not included in \eqref{cphi3-cong1} and \eqref{cphi3-cong2}. For example, it appears that for any $n\ge 0$,
\begin{align}
c\phi_{3}(9n+5) &\equiv 0 \pmod{3^5}, \\
c\phi_{3}(27n+26) &\equiv 0 \pmod{3^9}.
\end{align}

The above observations motivate us to improve and extend Kolitsch's congruences \eqref{cphi3-cong1} and \eqref{cphi3-cong2}. First, we find some congruences modulo small powers of 3.
\begin{theorem}\label{thm-specific}
For any integer $n\ge 0$ we have
\begin{align}
c\phi_{3}(3n+1) &\equiv 0 \pmod{3^2}, \label{thmcong-1} \\
c\phi_{3}(3n+2) &\equiv 0 \pmod{3^3}, \label{thmcong-2} \\
c\phi_{3}(9n+5) &\equiv 0 \pmod{3^5}, \label{thmcong-3} \\
c\phi_{3}(9n+8) &\equiv 0 \pmod{3^7}, \label{thmcong-4} \\
c\phi_{3}(27n+17) &\equiv 0 \pmod{3^8}, \label{thmcong-5} \\
c\phi_{3}(27n+26) &\equiv 0 \pmod{3^9}, \label{thmcong-6} \\
c\phi_{3}(81n+44) &\equiv 0 \pmod{3^{10}}, \label{thmcong-7} \\
c\phi_{3}(81n+71) &\equiv 0 \pmod{3^{12}}, \label{thmcong-8} \\
c\phi_{3}(243n+152) &\equiv 0 \pmod{3^{13}}, \label{thmcong-9} \\
c\phi_{3}(243n+233) &\equiv 0 \pmod{3^{14}}. \label{thmcong-10}
\end{align}
\end{theorem}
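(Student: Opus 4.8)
The plan is to follow the standard $3$-adic dissection machinery based on the generating function identity~\eqref{gen}. First I would rewrite $\mathrm{C}\Phi_{3}(q)$ in terms of eta-quotients and Hauptmoduln for $\Gamma_0(3)$ (or $\Gamma_0(9)$), using the classical fact that the Lambert-series piece $1+6\sum_{n\ge0}(q^{3n+1}/(1-q^{3n+1})-q^{3n+2}/(1-q^{3n+2}))$ is essentially a weight-$1$ theta function, namely $\varphi(q)\varphi(q^3)$-type or $E_1^{?}E_3^{?}$ combination; concretely one reduces $\mathrm{C}\Phi_3(q)$ to a sum of two eta-quotients on $\Gamma_0(9)$. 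Then I would introduce the $3$-dissection operator and the relevant modular parameter $t = qE_9^3/E_1^3$ (or $t=q E_3^{12}/(E_1^{12})$-type Hauptmodul), express $\sum c\phi_3(3n+r)q^n$ for each residue $r$ as a rational function in $t$, and set up the $U_3$-type operator $U: f(q)\mapsto \sum a_{3n+r}q^n$ acting on the finite-dimensional space spanned by powers of $t$.

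The core of the argument is then a \emph{localization lemma}: I would show that on the relevant module, $U_3$ maps $t^j$ into a $\mathbb{Z}$-linear combination of $t^i$ whose coefficients have controlled $3$-adic valuation, growing linearly in the number of iterations. This is the familiar ``matrix of the $U$-operator'' computation: one writes down an explicit (finite) matrix $M$ with integer entries, proves by direct expansion that each column is divisible by an appropriate power of $3$ after accounting for the weight, and then the congruences~\eqref{thmcong-1}--\eqref{thmcong-10} follow by iterating $M$ the required number of times (one or two or three iterations, since all ten congruences live within three applications of $U_3$) and reading off the $3$-adic valuation of the constant term. The symmetry between the two arithmetic progressions in each ``level'' (e.g.\ $9n+5$ versus $9n+8$, $27n+17$ versus $27n+26$, etc.) will come out of the two-dimensional structure of the initial dissection, where one branch already carries an extra factor of $3$ from the $6$ in~\eqref{gen}.

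In practice I would organize the proof by first recording the base dissection $\mathrm{C}\Phi_3(q) = A(q^3) + qB(q^3) + q^2 C(q^3)$ with $A,B,C$ explicit eta-quotients, observing immediately that $B$ and $C$ are divisible by $3$ and $3^3$ respectively (or by $3^2$ and $3^3$ after a further dissection), which already yields~\eqref{thmcong-1} and~\eqref{thmcong-2}. Then I would iterate: apply $U_3$ to the series $\sum c\phi_3(3n+2)q^n$, split again according to residues mod $3$, track the eta-quotients and their $3$-adic content, and continue to depth $3$. Each step is a finite computation in a polynomial ring $\mathbb{Z}[t]/(\text{relation})$, and the needed congruences are just valuations of specific coefficients.

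The main obstacle I expect is establishing the sharp $3$-adic gain per $U_3$-iteration — i.e., proving that the entries of the iterated $U_3$-matrix gain the \emph{full} power of $3$ claimed in~\eqref{newcong-2}--\eqref{thmcong-10} rather than the weaker bound~\eqref{cong-2}--\eqref{cong-3} coming directly from Kolitsch. This requires a careful choice of basis (likely powers of $3^a t$ for a well-chosen shift $a$, or an ``optimized'' Hauptmodul) so that the induction closes with the improved exponent; getting the bookkeeping of these $3$-adic valuations exactly right, and verifying that the finite matrices one writes down genuinely have the asserted divisibility in every entry, is the delicate part. Everything else is routine $q$-series manipulation and eta-quotient dissection.
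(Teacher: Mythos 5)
Your overall framework is the same as the paper's (which follows Kolitsch): dissect with the operators $H_{3,r}$, write each dissected generating function as a finite integer combination of powers of the Hauptmodul $\xi^{-1}=qE_{9}^{3}/E_{1}^{3}$ (your $t$), record the action of the extraction operator by integer matrices, and read off $3$-adic valuations; this is precisely the content of Lemmas \ref{Kolitsch-gen} and \ref{Kolitsch-adic}. The genuine gap is that you leave the entire content of the theorem --- the gain over Kolitsch's \eqref{cphi3-cong1}--\eqref{cphi3-cong2} --- to an unspecified ``careful choice of basis'' or ``optimized Hauptmodul'' that would force a sharp $3$-adic gain in \emph{every} entry per iteration. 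That is not where the gain comes from, and a rescaling $t\mapsto 3^{a}t$ (which shifts valuations proportionally to the column index) cannot supply it: the asymptotic gain per iteration is exactly Kolitsch's, and the improvement is a bounded, first-column phenomenon. The paper obtains it by computing the first iterates exactly and refining only $\pi(x_{k,1})$ via the recursions $x_{2k,1}=162x_{2k-1,1}+30x_{2k-1,2}+x_{2k-1,3}$ and $x_{2k+1,1}=21x_{2k,1}+x_{2k,2}$, giving $x_{2,1}=2\cdot 3^{5}$, $\pi(x_{3,1})\ge 6$, $\pi(x_{4,1})\ge 10$ (Lemma \ref{refine-lem}); this is what yields \eqref{thmcong-4}, \eqref{thmcong-5}, \eqref{thmcong-8} beyond \eqref{cong-2}--\eqref{cong-3}.

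Moreover, the congruences \eqref{thmcong-3}, \eqref{thmcong-6}, \eqref{thmcong-7}, \eqref{thmcong-9}, \eqref{thmcong-10} are not read off as valuations of entries of the iterated matrix at all: they come from reducing the \emph{exact} output of the previous level by the binomial theorem and then dissecting the resulting eta-quotient. For instance, \eqref{cphi-3-3n2} gives $\sum_{n\ge 0} c\phi_{3}(3n+2)q^{n}\equiv 27E_{3}^{5}\pmod{3^{5}}$, a series in $q^{3}$, which is the whole proof of \eqref{thmcong-3}; and the deeper cases use the explicit $3$-dissection \eqref{3-dissection} of $1/E_{1}^{3}$ in terms of $a(q^{3})$, whose three components carry the extra factors $1$, $3$, $9$ that produce \eqref{thmcong-6}, \eqref{thmcong-9}, \eqref{thmcong-10}. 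Your sketch gestures at ``splitting again according to residues mod $3$,'' but without these two concrete mechanisms (exact leading coefficients plus the $a(q^{3})$-dissection and $E_{3}$-purity) the computation only reproduces Kolitsch's weaker moduli. Finally, a bookkeeping error: the ten congruences do not live within three applications of $U_{3}$; the progressions modulo $243=3^{5}$ require dissection to depth five (the paper reaches them from the depth-four identity \eqref{last} followed by one more application of \eqref{3-dissection}).
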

All the moduli in congruences \eqref{thmcong-1}--\eqref{thmcong-10} cannot be replaced by higher powers of 3.

Then we give some general congruences beyond those in Theorem \ref{thm-specific}.
\begin{theorem}\label{thm-general}
For $k\ge 3$ and $n\ge 0$ we have
\begin{align}
c\phi_{3}\Big(3^{2k}n+\frac{7\cdot 3^{2k}+1}{8} \Big) &\equiv 0 \pmod{3^{4k+5}}, \label{general-1}\\
c\phi_{3}\Big(3^{2k+1}n+\frac{5\cdot 3^{2k+1}+1}{8}  \Big) &\equiv 0 \pmod{3^{4k+6}}, \label{general-2}\\
c\phi_{3}\Big(3^{2k+1}n+\frac{23\cdot 3^{2k}+1}{8}\Big) &\equiv 0 \pmod{3^{4k+7}}, \label{general-3}\\
c\phi_{3}\Big(3^{2k+2}n+\frac{13\cdot 3^{2k+1}+1}{8}\Big) &\equiv 0 \pmod{3^{4k+8}}. \label{general-4}
\end{align}
\end{theorem}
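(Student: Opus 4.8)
The plan is to establish Theorem~\ref{thm-general} by developing a $3$-dissection machinery for the generating function $\mathrm{C}\Phi_{3}(q)$ and then iterating it. Starting from Andrews' formula \eqref{gen}, I would first rewrite $\sum c\phi_{3}(n)q^n$ in terms of the Ramanujan--Eisenstein-type series and eta-quotients $E_k$, isolating the arithmetic progressions modulo $3$. The key observation is that the ``defect'' sum $1+6\sum_{n\ge 0}\bigl(q^{3n+1}/(1-q^{3n+1})-q^{3n+2}/(1-q^{3n+2})\bigr)$ is (up to normalization) a weight-one object whose behavior under dilation $q\mapsto q^{3}$ and under extraction of residue classes can be made explicit via standard Lambert-series identities. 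This yields a relation expressing the generating function for $c\phi_{3}(3n+r)$, for each $r\in\{0,1,2\}$, as an explicit polynomial in eta-quotients $E_{1},E_{3},E_{9},\dots$ with integer coefficients divisible by suitable powers of $3$.

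The heart of the argument is an induction on $k$. I would define a pair (or short vector) of auxiliary series $L_{k}(q),M_{k}(q)$ — essentially the generating functions for $c\phi_{3}\bigl(3^{j}n+\alpha_j\bigr)$ stripped of a controlled power of $3$ — and prove a \emph{linear recursion} of the shape
\[
\begin{pmatrix}L_{k+1}\\ M_{k+1}\end{pmatrix}
= A(q)\begin{pmatrix}L_{k}\\ M_{k}\end{pmatrix},
\]
where $A(q)$ is a $2\times 2$ matrix of eta-quotients (in the variable $q$, after a $3$-dissection) whose entries carry explicit $3$-adic valuations. Tracking the minimal valuation gained at each step — using the fact that the ``internal'' $3$-dissection of the relevant eta-quotients $E_1, E_3$ introduces factors of $3$ in a predictable way (this is where identities like the $3$-dissection of $1/E_1$ and of $E_1 E_3$, or of $E_1/E_3$, are invoked) — gives that passing from level $2k$ to level $2k+2$ multiplies the guaranteed modulus by $3^{4}$, which is exactly the increment $4k+5\mapsto 4k+9$ demanded by \eqref{general-1}. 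The base case $k=3$ of \eqref{general-1}--\eqref{general-4} is then checked directly by pushing the dissection four or five levels by hand (or, equivalently, can be bootstrapped from the $k=1,2$ data underlying Theorem~\ref{thm-specific}), and \eqref{general-2}--\eqref{general-4} follow by reading off the companion components $M_k$ and the odd-level analogues of the recursion.

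To make the $3$-adic bookkeeping rigorous I would work in $\mathbb{Z}[[q]]$ localized at $3$ and introduce, following the now-standard approach in this area, a filtered family of modules: let $U_r$ be the operator $\sum a(n)q^n\mapsto \sum a(3n+r)q^n$, and show by explicit computation that $U_r$ maps a finitely generated $\mathbb{Z}_{(3)}$-module spanned by a fixed finite list of eta-quotients into $3^{c_r}$ times another such module, for constants $c_r$ one determines once and for all. Congruences \eqref{general-1}--\eqref{general-4} then amount to the statement that the image of the relevant generator, after the prescribed sequence of $U_r$'s, lands in $3^{N}\cdot(\text{module})$ with $N$ the claimed exponent.

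The main obstacle I anticipate is \emph{not} the abstract induction but the precise constants: one must verify that the valuation gained over a length-two block of $U_r$ operations is \emph{exactly} $4$ and never less, which requires controlling possible cancellations in the entries of $A(q)$ — a lower bound on valuation of a sum of eta-quotients can be destroyed if the leading $3$-adic terms cancel, so one needs an honest computation modulo $3^{5}$ or so of the full dissection relations, not just term-by-term estimates. A secondary subtlety is pinning down the exact shift constants $(7\cdot 3^{2k}+1)/8$, $(5\cdot 3^{2k+1}+1)/8$, etc.: these come from solving $8x\equiv -1\pmod{3^{j}}$ at each level and tracking how the residue $r$ chosen at step $j$ (namely $r\equiv 8^{-1}(\text{previous shift})\pmod 3$) threads through the iteration, which I would organize in a short table rather than inline.
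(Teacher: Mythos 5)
Your outline follows the same broad strategy the paper uses (and that Kolitsch pioneered): represent the generating functions of the successive progressions via iterated $3$-dissection in a fixed family of eta-quotients/Hauptmodul powers, obtain a linear recursion for the coefficient data, and track $3$-adic valuations, with small cases checked by explicit computation and sub-progressions extracted via the $3$-dissection of $1/E_1^3$. However, as written it has a genuine gap: it never identifies the ingredient that actually produces the exponents $4k+5,\dots,4k+8$ rather than Kolitsch's $4k+2$ in \eqref{cphi3-cong2}. The per-block gain of $3^4$ that you invoke is already present in Kolitsch's argument and, propagated from the $k=1,2$ data, only reproduces his bound. What is needed is a \emph{refined} lower bound on the $3$-adic order of the leading coefficient in Kolitsch's expansions \eqref{odd}--\eqref{even} (in the paper, $\pi(x_{2k,1})\ge 4k+3$ and $\pi(x_{2k+1,1})\ge 4k+4$ for $k\ge 3$, Lemma \ref{refine-lem}). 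Moreover, your proposed $2\times 2$ (or ``short vector'') recursion misrepresents the structure: the leading coefficient at one level is controlled by the \emph{second and third} coefficients of the previous level (e.g.\ $x_{2k,1}=162x_{2k-1,1}+30x_{2k-1,2}+x_{2k-1,3}$ and $x_{2k+1,1}=21x_{2k,1}+x_{2k,2}$), so the induction cannot close on a two-component truncation; one needs valuation bounds for the whole coefficient sequence (Kolitsch's Lemma \ref{Kolitsch-adic}) together with the improved base cases computed out to $x_{6,1}$ --- going noticeably beyond the $k=1,2$ data you suggest bootstrapping from. Incidentally, the ``cancellation'' worry you raise is not the issue (the recursion has nonnegative integer entries, and only lower bounds, not exact valuations, are required); the real work is in the coupling just described.

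A second, smaller gap concerns \eqref{general-2}--\eqref{general-4}: these do not follow from valuation counting alone. In the paper they come from reducing the right-hand sides of \eqref{even} and \eqref{odd} by the binomial theorem (so that, e.g., $E_3^{11}/E_1^{12}$ collapses to $E_3^{7}$ or $E_3^{8}/E_1^{3}$ modulo the relevant power of $3$) and then extracting residue classes mod $3$ using the explicit dissection \eqref{3-dissection}, where the $q^{3n+1}$ and $q^{3n+2}$ components carry visible extra factors $3$ and $9$. Your proposal gestures at these identities but does not explain how each of the three extra powers of $3$ in \eqref{general-2}, \eqref{general-3}, \eqref{general-4} is obtained; without that step the claimed moduli for the sub-progressions are not established.
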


The second goal of this paper is to extend congruences \eqref{odd-cong1} and \eqref{odd-cong2} to larger families of congruences for $c\phi_{9}(n)$ and improve Kolitsch's congruences \eqref{cphi9-cong1} and \eqref{cphi9-cong2}. We obtain results for $c\phi_{9}(n)$ which are similar to Theorems \ref{thm-specific} and \ref{thm-general}.
\begin{theorem}\label{cphi9-thm-specific}
For any integer $n\ge 0$ we have
\begin{align}
c\phi_{9}(9n+3) &\equiv 0 \pmod{3^2}, \label{cphi9-specific-1}\\
c\phi_{9}(9n+6) &\equiv 0 \pmod{3^3}, \label{cphi9-specific-2}\\
c\phi_{9}(27n+15) &\equiv 0 \pmod{3^5}, \label{cphi9-specific-3}\\
c\phi_{9}(27n+24) &\equiv 0 \pmod{3^7}, \label{cphi9-specific-4}\\
c\phi_{9}(81n+51) &\equiv 0 \pmod{3^8}, \label{cphi9-specific-5}\\
c\phi_{9}(81n+78) &\equiv 0 \pmod{3^9}, \label{cphi9-specific-6}\\
c\phi_{9}(243n+132) &\equiv 0 \pmod{3^{10}}, \label{cphi9-specific-7}\\
c\phi_{9}(243n+213) &\equiv 0 \pmod{3^{12}}, \label{cphi9-specific-8}\\
c\phi_{9}(729n+456)&\equiv 0 \pmod{3^{13}}, \label{cphi9-specific-9}\\
c\phi_{9}(729n+699)&\equiv 0 \pmod{3^{14}}. \label{cphi9-specific-10}
\end{align}
\end{theorem}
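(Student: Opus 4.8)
\emph{Plan of proof.} The fastest route is to deduce Theorem~\ref{cphi9-thm-specific} from Kolitsch's relation \eqref{cphi-9-relate} together with the congruences of Theorem~\ref{thm-specific}. The structural point is that every progression in Theorem~\ref{cphi9-thm-specific} is $3$ times a progression appearing in Theorem~\ref{thm-specific}: one has $9n+3=3(3n+1)$, $9n+6=3(3n+2)$, $27n+15=3(9n+5)$, and so on through $729n+699=3(243n+233)$. Substituting $n=3m$ in \eqref{cphi-9-relate} gives
\[
c\phi_9(3m)=3\,c\phi_3(9m-1)+c\phi_3(m),
\]
so each line of Theorem~\ref{cphi9-thm-specific} reduces to statements about $c\phi_3$ at the ``diagonal'' argument $m$ and the ``shifted'' argument $9m-1$. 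For example, $c\phi_9(9n+3)=3\,c\phi_3(27n+8)+c\phi_3(3n+1)$; since $27n+8\equiv 8\pmod 9$, \eqref{thmcong-4} gives $3\,c\phi_3(27n+8)\equiv 0\pmod{3^{8}}$, and $c\phi_3(3n+1)\equiv 0\pmod{3^{2}}$ by \eqref{thmcong-1}, which yields \eqref{cphi9-specific-1}.

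The remaining bookkeeping is routine: if $m\equiv a\pmod{3^{j}}$ then $9m-1\equiv 9a-1\pmod{3^{j+2}}$, and checking the ten values of $a$ one finds that $9a-1$ always lands in (a subprogression of) one of the progressions of Theorem~\ref{thm-specific} --- explicitly $1\mapsto 8\pmod{27}$, $2\mapsto 17\pmod{27}$, $5\mapsto 44\pmod{81}$, $8\mapsto 71\pmod{81}$, $17\mapsto 152\pmod{243}$, $26\mapsto 233\pmod{243}$, while $44,71,152,233$ all map to residues that are $\equiv 152\pmod{243}$. Hence in every case $c\phi_3(9m-1)\equiv 0\pmod{3^{13}}$, so $3\,c\phi_3(9m-1)\equiv 0\pmod{3^{14}}$. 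Since each of the ten target moduli in Theorem~\ref{cphi9-thm-specific} is at most $3^{14}$, the shifted term never obstructs, and the modulus is always forced by the diagonal term $c\phi_3(m)$. This also explains why the moduli in the two theorems agree line by line, and --- using the sharpness already noted for Theorem~\ref{thm-specific} --- why none of them can be improved; the smallest margin occurs in \eqref{cphi9-specific-10}, where the shifted term is divisible by $3^{14}$ and the diagonal term by exactly $3^{14}$.

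Beyond Theorem~\ref{thm-specific} itself there is essentially no obstacle in this argument; one need only observe that \eqref{cphi-9-relate} (equivalently \eqref{cphi-9-bar-relation}) is invoked together with the independent proof of it supplied elsewhere in this paper, so no circularity is introduced. As announced in the abstract, I will also give a second proof of Theorem~\ref{cphi9-thm-specific} that does not use \eqref{cphi-9-relate}: starting from the $q$-product representation \eqref{cphi9-gen} for $\mathrm{C}\Phi_9(q)$, one extracts the relevant progressions by iterated $3$-dissections of $E_1$, $E_3$, $E_9$ and their quotients, running an induction on the level that parallels the proof of Theorem~\ref{thm-specific}. I expect the delicate part there to be organizing the $3$-dissection so that the $3$-adic valuations of the coefficients are visible at each stage; because \eqref{cphi9-gen} is a sum of five terms of rather different shapes, keeping track of which $E$-quotients contribute at each level is the main source of difficulty.
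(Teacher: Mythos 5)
Your argument is, in substance, the paper's own first proof: specialize Kolitsch's relation \eqref{cphi-9-relate} to $c\phi_{9}(3m)=3c\phi_{3}(9m-1)+c\phi_{3}(m)$ and read each line of Theorem \ref{cphi9-thm-specific} off the corresponding line of Theorem \ref{thm-specific}, noting that the shifted term $3c\phi_{3}(9m-1)$ is always divisible by at least the target power of $3$. One correction to your bookkeeping: the blanket claim that $c\phi_{3}(9m-1)\equiv 0\pmod{3^{13}}$ ``in every case'' is false for the first four progressions, where $9m-1$ lies in $9n+8$, $27n+17$, $81n+44$, $81n+71$ and Theorem \ref{thm-specific} only guarantees divisibility by $3^{7}$, $3^{8}$, $3^{10}$, $3^{12}$ respectively; this does no harm, because the target moduli there are only $3^{2}$, $3^{3}$, $3^{5}$, $3^{7}$, but the uniform $3^{13}$ bound should be replaced by the line-by-line comparison that your own list of correspondences already makes immediate. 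Finally, sharpness of the moduli does not follow from this decomposition (exact divisibility of the two summands does not rule out cancellation in their sum); as in the paper, sharpness is a separate numerical observation and is not part of what the theorem asserts.
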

All the moduli in congruences \eqref{cphi9-specific-1}--\eqref{cphi9-specific-10} cannot be replaced by higher powers of 3.

\begin{theorem}\label{cphi9-thm-general}
For $k\ge 3$ and $n\ge 0$ we have
\begin{align}
c\phi_{9}\Big(3^{2k+1}n+\frac{7\cdot 3^{2k+1}+3}{8} \Big) &\equiv 0 \pmod{3^{4k+5}}, \label{cphi9-general-1}\\
c\phi_{9}\Big(3^{2k+2}n+\frac{5\cdot 3^{2k+2}+3}{8}  \Big) &\equiv 0 \pmod{3^{4k+6}}, \label{cphi9-general-2}\\
c\phi_{9}\Big(3^{2k+2}n+\frac{23\cdot 3^{2k+1}+3}{8}\Big) &\equiv 0 \pmod{3^{4k+7}}, \label{cphi9-general-3}\\
c\phi_{9}\Big(3^{2k+3}n+\frac{13\cdot 3^{2k+2}+3}{8}\Big) &\equiv 0 \pmod{3^{4k+8}}. \label{cphi9-general-4}
\end{align}
\end{theorem}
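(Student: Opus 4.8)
The plan is to derive Theorem \ref{cphi9-thm-general} from Theorem \ref{thm-general} by means of Kolitsch's identity \eqref{cphi-9-relate},
\[
c\phi_{9}(N)=3\,c\phi_{3}(3N-1)+c\phi_{3}\!\left(\tfrac{N}{3}\right),
\]
where $c\phi_{3}(x)=0$ unless $x\in\mathbb{Z}$. The key point is that in each of the four progressions occurring in \eqref{cphi9-general-1}--\eqref{cphi9-general-4} the argument $N$ is divisible by $3$, so \emph{both} terms on the right are honest values of $c\phi_{3}$; and the happy coincidence is that both of the resulting progressions for $c\phi_{3}$ are exactly of the shape controlled by Theorem \ref{thm-general}.

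I would carry this out case by case, the first being representative. Put $N=3^{2k+1}n+\frac{7\cdot 3^{2k+1}+3}{8}$ with $k\ge 3$. Since $2k+1\ge 1$ one has $3\mid N$, and a short computation gives
\[
\frac{N}{3}=3^{2k}n+\frac{7\cdot 3^{2k}+1}{8},\qquad 3N-1=3^{2k+2}n+\frac{7\cdot 3^{2k+2}+1}{8}.
\]
By \eqref{general-1} the first is $\equiv 0\pmod{3^{4k+5}}$; applying \eqref{general-1} with $k$ replaced by $k+1$ (legitimate since $k+1\ge 3$) the second is $\equiv 0\pmod{3^{4k+9}}$, hence $3\,c\phi_{3}(3N-1)\equiv 0\pmod{3^{4k+10}}$. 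Substituting into \eqref{cphi-9-relate} yields $c\phi_{9}(N)\equiv 0\pmod{3^{4k+5}}$, which is \eqref{cphi9-general-1}. Note that the two summands carry genuinely different $3$-adic valuations, so no cancellation can occur and the smaller exponent $4k+5$ is exactly what survives.

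The remaining three cases follow the same template, with the divisibilities supplied by \eqref{general-2}, \eqref{general-3}, \eqref{general-4} respectively. For \eqref{cphi9-general-2} one gets $N/3=3^{2k+1}n+\frac{5\cdot 3^{2k+1}+1}{8}$ and $3N-1=3^{2k+3}n+\frac{5\cdot 3^{2k+3}+1}{8}$, governed by \eqref{general-2} at levels $k$ and $k+1$. For \eqref{cphi9-general-3} one gets $N/3=3^{2k+1}n+\frac{23\cdot 3^{2k}+1}{8}$ and $3N-1=3^{2k+3}n+\frac{23\cdot 3^{2k+2}+1}{8}$, governed by \eqref{general-3}. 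For \eqref{cphi9-general-4} one gets $N/3=3^{2k+2}n+\frac{13\cdot 3^{2k+1}+1}{8}$ and $3N-1=3^{2k+4}n+\frac{13\cdot 3^{2k+3}+1}{8}$, governed by \eqref{general-4}. In every instance the $c\phi_{3}(N/3)$ term contributes precisely the asserted exponent for $c\phi_{9}$, while $3\,c\phi_{3}(3N-1)$ contributes a strictly larger power of $3$.

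There is essentially no obstacle here beyond bookkeeping: for each of the four progressions one must check that the stated residue $\tfrac{a\cdot 3^{e}+3}{8}$ is divisible by $3$ (so that the $c\phi_{3}(N/3)$ term is nonzero and lands on the correct progression), and that the shifted index $k+1$ still meets the hypothesis $k\ge 3$ of Theorem \ref{thm-general}; all the substance is already packed into Theorem \ref{thm-general}. If one preferred a proof not relying on \eqref{cphi-9-relate}, one could instead iterate a $3$-dissection of $\mathrm{C}\Phi_{9}(q)$ built from the eta-quotient expansion \eqref{cphi9-gen}, paralleling the treatment of $c\phi_{3}$; the difficulty there would be the standard one of organizing the relevant series into a finitely generated module over a suitable ring and tracking the $3$-adic growth of the coefficients, rather than any genuinely new idea.
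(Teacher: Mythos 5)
Your proposal is correct and coincides with the paper's first proof: the paper likewise combines Kolitsch's relation \eqref{cphi-9-relate} (in the form $c\phi_{9}(3n)=3c\phi_{3}(9n-1)+c\phi_{3}(n)$) with Theorem \ref{thm-general} applied at levels $k$ and $k+1$, exactly as in your case-by-case bookkeeping. Your closing remark about dissecting \eqref{cphi9-gen} directly is precisely the paper's second proof, so nothing essentially new is missing.
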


The paper is organized as follows. In Section \ref{sec-cphi-3} we first collect some useful facts from the work of Kolitsch \cite{Kolitsch-1}. Then we give proofs to Theorems \ref{thm-specific}--\ref{thm-general}. In Section \ref{sec-cphi-9} we present two different proofs to the congruences satisfied by $c\phi_{9}(n)$. The first proof uses \eqref{cphi-9-relate} and Theorem \ref{thm-specific}--\ref{thm-general}. We will also give a new proof to \eqref{cphi-9-relate} by comparing the generating functions of $c\phi_{3}(n)$ and $c\phi_{9}(n)$. In the second proof of Theorems \ref{cphi9-thm-specific}--\ref{cphi9-thm-general},  we do not use \eqref{cphi-9-relate}. Instead, we will establish congruences modulo powers of 3 for the coefficients in the series expansion of each term in \eqref{cphi9-gen}.

\section{Congruences Modulo Powers of 3 for $c\phi_{3}(n)$}\label{sec-cphi-3}
Following the notation in \cite{Kolitsch-1}, we define
\begin{align*}
\xi=\frac{E_{1}^{3}}{qE_{9}^{3}}, \quad T=\frac{q^3E_{9}^{3}}{E_{3}^{3}}, \quad S=\frac{E_{3}^{4}}{E_{9}^{4}}+9\frac{q^3E_{3}E_{27}^3}{E_{9}^{4}}.
\end{align*}
The functions $\xi$ and $T$ satisfy an modular equation of order 3.
\begin{lemma}\label{modular}
(Cf.\ \cite[Lemma 3]{Kolitsch-1}) We have
\begin{align}
\xi^3+9\xi^2+27\xi=q^9T^{-4},
\end{align}
or equivalently,
\begin{align}
E_{1}^9E_{9}^3+9qE_{1}^6E_{9}^6+27q^2E_{1}^3E_{9}^9=E_{3}^{12}. \label{modeq}
\end{align}
\end{lemma}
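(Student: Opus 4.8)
The plan is to establish the product form \eqref{modeq}, from which the $\xi$--$T$ form is immediate: substituting $\xi=E_1^3/(qE_9^3)$ and $T=q^3E_9^3/E_3^3$ into $\xi^3+9\xi^2+27\xi=q^9T^{-4}$ and clearing denominators (multiply by $q^3E_9^{12}$) returns exactly \eqref{modeq}. So everything reduces to the eta-product identity $E_1^9E_9^3+9qE_1^6E_9^6+27q^2E_1^3E_9^9=E_3^{12}$.

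I would prove \eqref{modeq} by a modular-forms / valence-formula argument. Writing $\eta(\tau)=q^{1/24}(q;q)_\infty$ and matching the fractional powers of $q$, the identity \eqref{modeq} multiplied by $q^{3/2}$ is the eta-quotient identity
\[\eta(\tau)^9\eta(9\tau)^3+9\,\eta(\tau)^6\eta(9\tau)^6+27\,\eta(\tau)^3\eta(9\tau)^9=\eta(3\tau)^{12}.\]
Dividing through by $\eta(3\tau)^{12}$, each of the three resulting ratios is a weight-$0$ eta quotient on $\Gamma_0(9)$ (the quadratic character that could a priori appear is trivial here), hence a meromorphic function on the genus-$0$ curve $X_0(9)$; so is the difference $D$ of the two sides. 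Using Ligozat's order formula one checks that $D$ is holomorphic at the cusps $\infty$ and $0$ and has at worst a simple pole at each of the two cusps over $c=3$, so its polar divisor has degree $\le 2$. On the other hand a short computation with $q$-expansions shows that $E_1^9E_9^3+9qE_1^6E_9^6+27q^2E_1^3E_9^9=1+0\cdot q+0\cdot q^2+O(q^3)$, which equals $E_3^{12}+O(q^3)$; hence $D$ vanishes to order $\ge 3$ at $\infty$. Since a nonzero meromorphic function on a compact Riemann surface has equally many zeros and poles, $D\equiv 0$, which is \eqref{modeq} and hence the lemma.

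A second, more ``$q$-series'' route runs through Borwein's cubic theta functions $a(q)=\sum_{m,n\in\mathbb{Z}}q^{m^2+mn+n^2}$, $b(q)=E_1^3/E_3$ and $c(q)=3q^{1/3}E_3^3/E_1$, so that $b(q^3)=E_3^3/E_9$ and $c(q^3)=3qE_9^3/E_3$. Adding $27q^3E_9^{12}$ to the left side of \eqref{modeq} and factoring the resulting cube turns it into $E_9^3(E_1^3+3qE_9^3)^3$, while the Borwein relation $a^3=b^3+c^3$ evaluated at $q^3$ turns the right side plus $27q^3E_9^{12}$ into $E_3^3E_9^3a(q^3)^3$. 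Cancelling $E_9^3$ and taking cube roots of power series with constant term $1$, \eqref{modeq} becomes $E_1^3/E_3+3qE_9^3/E_3=a(q^3)$, i.e.\ $b(q)+c(q^3)=a(q^3)$; and this follows from the classical cubic dissection $a(q)=a(q^3)+2c(q^3)$ together with $b(q)=\tfrac12\bigl(3a(q^3)-a(q)\bigr)$.

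There is no deep obstacle in either route — \eqref{modeq} is a classical cubic modular equation. The only thing demanding care is the bookkeeping: in the first route, correctly enumerating the cusps of $\Gamma_0(9)$ with their widths and the orders of the eta quotients there, and checking the two low-order $q$-coefficients; in the second, isolating exactly which standard cubic theta identities are needed and citing (or proving) them. Alternatively one may simply quote \eqref{modeq} from the literature on Ramanujan's cubic theory.
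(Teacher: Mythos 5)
Your proposal is correct, but it differs from the paper, which offers no proof of Lemma \ref{modular} at all: the identity is simply quoted from Kolitsch \cite[Lemma 3]{Kolitsch-1}, exactly the fallback you mention in your last sentence. Both of your routes check out. In the valence-formula route the bookkeeping does work: the three eta quotients obtained after dividing by $\eta(3\tau)^{12}$ are weight-$0$ modular functions on $\Gamma_{0}(9)$ with trivial character, their orders at the four cusps (widths $1,9,1,1$ at $\infty,0,1/3,2/3$) are $(0,1,2)$ at $\infty$, $(2,1,0)$ at $0$, and $-1$ at each cusp of denominator $3$, so the difference of the two sides has polar degree at most $2$, while the expansion $1+0\cdot q+0\cdot q^{2}+O(q^{3})$ forces vanishing to order at least $3$ at $\infty$, whence the difference is identically zero. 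Your second route is the more economical one and is closest in spirit to tools the paper already uses: adding $27q^{3}E_{9}^{12}$ to both sides of \eqref{modeq}, the left side factors as $E_{9}^{3}(E_{1}^{3}+3qE_{9}^{3})^{3}$ and the right side becomes $E_{3}^{3}E_{9}^{3}a^{3}(q^{3})$ via $a^{3}=b^{3}+c^{3}$, so after taking cube roots of series with constant term $1$ everything reduces to $a(q^{3})=E_{1}^{3}/E_{3}+3qE_{9}^{3}/E_{3}$, which follows from the same two BBG identities \eqref{a-1} and \eqref{a-2} that the paper invokes in Lemma \ref{a-exp-lem} (there they are combined the other way to get \eqref{a-new-exp}). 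Note the paper derives \eqref{a-cubic} from \eqref{a-new-exp} together with \eqref{modeq}, whereas you go in the opposite direction by importing $a^{3}=b^{3}+c^{3}$ directly from Borwein--Borwein--Garvan; since that identity is proved independently there, your argument is not circular. The net effect is that your write-up makes the paper more self-contained than the original citation-only treatment.
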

Next we define an infinite matrix $\left(m_{i,j}\right)_{i,j \ge 1}$ by \\
(1) $m_{1,1}=3$, $m_{1,i}=0$ for $i\ge 2$; \\
(2) $m_{2,1}=1$, $m_{2,2}=3^4$, $m_{2,j}=0$ for $j\ge 3$;\\
(3) $m_{3,1}=0$, $m_{3,2}=2\cdot 3^3$, $m_{3,3}=3^7$, $m_{3,j}=0$ for $j\ge 4$; \\
(4) $m_{i,j}=m_{i-3,j-1}+9m_{i-2,j-1}+27m_{i-1,j-1}$ for $i\ge 4$, $j\ge 2$.

By induction on $i$, it is not difficult to show that
\begin{align}\label{m-vanish}
m_{i,j}=0,\quad \textrm{if} \quad i \ge 3j.
\end{align}
This was first observed by Kolitsch \cite[p.\ 346]{Kolitsch-1}.

We define operators $H_{m,r}$ ($0\le r \le m-1$) which act on infinite series as
\begin{align}
H_{m,r}\left(\sum_{n \in \mathbb{Z}}g(n)q^n \right):=\sum_{n \in \mathbb{Z}}g(mn+r)q^{mn+r}.
\end{align}
The importance of $\left(m_{i,j}\right)_{i,j \ge 1}$ can be seen from the following lemma.
\begin{lemma}\label{H2-xi-lem}
(Cf.\ \cite[Lemma 4]{Kolitsch-1}) We have
\begin{align}\label{H2-xi}
H_{3,2}\left(\xi^{-i} \right)=\frac{S}{q}\sum_{j=1}^{\infty}m_{i,j}T^{4j}q^{-9j}.
\end{align}
\end{lemma}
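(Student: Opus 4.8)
The plan is to deduce \eqref{H2-xi} from the modular equation of Lemma \ref{modular} together with one auxiliary eta-product identity, and then to run an induction on $i$ governed by the defining recursion (4) of the matrix $\left(m_{i,j}\right)$. The starting point is the elementary observation that $S$, $T$, and hence $q^{-9}T^4$, have $q$-expansions supported only on exponents divisible by $3$; this is immediate from the definitions, since $T=q^3E_9^3/E_3^3$ and $S=E_3^4/E_9^4+9q^3E_3E_{27}^3/E_9^4$ while each $E_k$ lies in $\mathbb{Z}[[q^k]]$. Consequently, whenever a series $f$ is supported on exponents $\equiv 0\pmod 3$ one has $H_{3,r}(fg)=f\,H_{3,r}(g)$ for every (Laurent) series $g$ and every residue $r$. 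Multiplying the identity $\xi^3+9\xi^2+27\xi=q^9T^{-4}$ of Lemma \ref{modular} through by $q^{-9}T^4\xi^{-i}$ gives, for every integer $i$,
\[
\xi^{-i}=q^{-9}T^4\bigl(\xi^{-(i-3)}+9\xi^{-(i-2)}+27\xi^{-(i-1)}\bigr),
\]
and applying $H_{3,2}$ and extracting $q^{-9}T^4$ yields the recursion
\[
H_{3,2}(\xi^{-i})=q^{-9}T^4\bigl(H_{3,2}(\xi^{-(i-3)})+9H_{3,2}(\xi^{-(i-2)})+27H_{3,2}(\xi^{-(i-1)})\bigr),
\]
which will drive the whole argument.

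The single piece of genuine input beyond Lemma \ref{modular} is the identity $S=(E_1^3+3qE_9^3)/E_9^3$, equivalently $E_1^3E_9+3qE_9^4=E_3^4+9q^3E_3E_{27}^3$; in eta-quotient language this reads $\eta(\tau)^3\eta(9\tau)+3\eta(9\tau)^4=\eta(3\tau)^4+9\eta(3\tau)\eta(27\tau)^3$, which I would prove by comparing $q$-expansions up to the Sturm bound for $M_2(\Gamma_0(27))$ (equal to $6$); it is also a classical eta-product identity deducible from the standard relations among Borwein's cubic theta functions $a$, $b=E_1^3/E_3$, $c=3q^{1/3}E_3^3/E_1$, via $b(q)=a(q^3)-c(q^3)$. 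Granting this, put $\eta:=\xi+3=(E_1^3+3qE_9^3)/(qE_9^3)=S/q$. Lemma \ref{modular} then gives $\eta^3=\xi^3+9\xi^2+27\xi+27=q^9T^{-4}+27$. Moreover, since $E_1^3=\sum_{n\ge 0}(-1)^n(2n+1)q^{n(n+1)/2}$ and no triangular number is $\equiv 2\pmod 3$, the series $E_1^3+3qE_9^3$ is supported on exponents $\equiv 0\pmod 3$; hence $\eta$ is supported on exponents $\equiv 2\pmod 3$ and $\eta^2$ on exponents $\equiv 1\pmod 3$. Therefore $H_{3,2}(1)=0$, $H_{3,2}(\xi)=H_{3,2}(\eta)=\eta=S/q$, and $H_{3,2}(\xi^2)=H_{3,2}(\eta^2-6\eta+9)=-6S/q$.

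The proof of \eqref{H2-xi} then proceeds by induction on $i\ge 1$ via the recursion above, with base values $H_{3,2}(\xi^0)=0$, $H_{3,2}(\xi)=S/q$, $H_{3,2}(\xi^2)=-6S/q$. For $i=1$ one has $\xi^{-1}=q^{-9}T^4(\xi^2+9\xi+27)$, whence $H_{3,2}(\xi^{-1})=q^{-9}T^4\bigl(-6S/q+9S/q\bigr)=\tfrac{S}{q}\cdot 3T^4q^{-9}$, matching $m_{1,1}=3$; feeding this back into the recursion handles $i=2$ and $i=3$, giving $H_{3,2}(\xi^{-2})=\tfrac{S}{q}(T^4q^{-9}+3^4T^8q^{-18})$ and $H_{3,2}(\xi^{-3})=\tfrac{S}{q}(2\cdot 3^3T^8q^{-18}+3^7T^{12}q^{-27})$, which match rows $2$ and $3$ of the matrix. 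For $i\ge 4$, assuming \eqref{H2-xi} for $i-1$, $i-2$, $i-3$, the recursion gives
\[
H_{3,2}(\xi^{-i})=q^{-9}T^4\cdot\frac{S}{q}\sum_{j}\bigl(m_{i-3,j}+9m_{i-2,j}+27m_{i-1,j}\bigr)T^{4j}q^{-9j}=\frac{S}{q}\sum_{j}m_{i,j}T^{4j}q^{-9j},
\]
the last equality following by the shift $j\mapsto j-1$ and the defining recursion (4); the would-be $j=1$ term vanishes since $m_{i,1}=0$ for $i\ge 3$ by \eqref{m-vanish}. The step I expect to require the most care is the verification of the auxiliary eta-product identity for $S$; once it is established, the rest is bookkeeping with the recursion and with the residues modulo $3$ on which $S$, $T$, $\eta$, and $\eta^2$ are supported.
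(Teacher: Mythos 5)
Your proof is correct and follows essentially the route the paper (via Kolitsch) indicates: you derive from Lemma \ref{modular} the recursion $H_{3,2}(\xi^{-i})=q^{-9}T^{4}\bigl(H_{3,2}(\xi^{-(i-3)})+9H_{3,2}(\xi^{-(i-2)})+27H_{3,2}(\xi^{-(i-1)})\bigr)$, which is exactly the defining recursion of $(m_{i,j})$, and your base cases $i=1,2,3$ reproduce the first three rows of the matrix, with the $j=1$ term for $i\ge 4$ vanishing by \eqref{m-vanish}. Two small remarks: your auxiliary identity $S=q(\xi+3)$ needs no Sturm-bound or separate eta-product verification, since it follows at once by combining \eqref{Jacobi-dissection} with \eqref{a-new-exp} at $q\mapsto q^{3}$ (both already in the paper); and your triangular-number argument as stated only rules out exponents $\equiv 2\pmod 3$ in $E_{1}^{3}$, which is weaker than the support claim you assert, but this gap is harmless because the fact that $\eta=S/q$ is supported on exponents $\equiv 2\pmod 3$ is immediate from $S\in\mathbb{Z}[[q^{3}]]$ once the identity is in hand.
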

This lemma can be proved by using Lemma \ref{modular}. See \cite[p.\ 346]{Kolitsch-1} for discussions.

Next we define two matrices $\left(a_{i,j}\right)_{i,j\ge 1}$ and $\left(b_{i,j}\right)_{i,j\ge 1}$ by
\begin{align}
a_{i,j}=9m_{4i+1,i+j}+m_{4i,i+j}, \label{a-defn} \\
b_{i,j}=m_{4i-1,i+j}+9m_{4i,i+j}. \label{b-defn}
\end{align}
From direct computations, we find that
\begin{align*}
&a_{1,1}=21, \quad a_{1,2}=10206, \quad a_{1,3}=767637, \quad a_{1,4}=14348907, \quad a_{1,j}=0, \quad j \ge 5, \\
&b_{1,1}=162, \quad b_{1,2}=21870,  \quad b_{1,3}=531441,\quad b_{1,j}=0, \quad j \ge 4.
\end{align*}

\begin{lemma}\label{H2-lem}
(Cf. \cite[Lemma 5]{Kolitsch-1}.) We have
\begin{align*}
H_{3,2}\left(9\xi^{-4i-1}+\xi^{-4i}\right)&=\frac{S}{q}\sum_{j=1}^{\infty}a_{i,j}T^{4i+4j}q^{-9i-9j},\\
H_{3,2}\left(\xi^{-4i+1}+9\xi^{-4i}\right)&=\frac{S}{q}\sum_{j=1}^{\infty}b_{i,j}T^{4i+4j}q^{-9i-9j}.
\end{align*}
\end{lemma}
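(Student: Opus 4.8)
The plan is to deduce both identities directly from Lemma \ref{H2-xi-lem} by using the linearity of the operator $H_{3,2}$ and the vanishing property \eqref{m-vanish}. Since $H_{3,2}$ acts linearly on formal series, I would first write
\[
H_{3,2}\bigl(9\xi^{-4i-1}+\xi^{-4i}\bigr)=9\,H_{3,2}\bigl(\xi^{-4i-1}\bigr)+H_{3,2}\bigl(\xi^{-4i}\bigr),
\qquad
H_{3,2}\bigl(\xi^{-4i+1}+9\xi^{-4i}\bigr)=H_{3,2}\bigl(\xi^{-4i+1}\bigr)+9\,H_{3,2}\bigl(\xi^{-4i}\bigr),
\]
and then apply \eqref{H2-xi} to each summand. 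This immediately gives
\[
H_{3,2}\bigl(9\xi^{-4i-1}+\xi^{-4i}\bigr)=\frac{S}{q}\sum_{j=1}^{\infty}\bigl(9m_{4i+1,j}+m_{4i,j}\bigr)T^{4j}q^{-9j},
\qquad
H_{3,2}\bigl(\xi^{-4i+1}+9\xi^{-4i}\bigr)=\frac{S}{q}\sum_{j=1}^{\infty}\bigl(m_{4i-1,j}+9m_{4i,j}\bigr)T^{4j}q^{-9j}.
\]

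The remaining task is a shift of summation index $j\mapsto i+j$. Here \eqref{m-vanish} is the crucial input: since $m_{r,\ell}=0$ whenever $r\ge 3\ell$, one checks that $m_{4i,\ell}=m_{4i+1,\ell}=m_{4i-1,\ell}=0$ for all $\ell\le i$ (indeed for all $\ell\le\lfloor 4i/3\rfloor$ or $\lfloor (4i\pm1)/3\rfloor$, as appropriate, and each of these exceeds $i$ for $i\ge 1$). Consequently only the terms with $j\ge i+1$ contribute to the two sums above, so relabelling $j=i+j'$ turns the coefficient of $T^{4i+4j'}q^{-9i-9j'}$ into exactly $9m_{4i+1,i+j'}+m_{4i,i+j'}=a_{i,j'}$ in the first case and $m_{4i-1,i+j'}+9m_{4i,i+j'}=b_{i,j'}$ in the second, by the definitions \eqref{a-defn} and \eqref{b-defn}. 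Renaming $j'$ as $j$ yields precisely the two asserted formulas.

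The argument is essentially bookkeeping, and I expect the only point that needs a word of care is the reindexing step: one must verify that no nonzero term is discarded when the summation is shifted from starting at $j=1$ to starting at $j=i+1$, which is exactly what \eqref{m-vanish} guarantees, and that the resulting expressions are well-defined elements of the relevant ring of Laurent series (for each fixed $i$ the sum over $j$ is a power series in $T^{4}q^{-9}$ times $S/q$, while for each fixed $j$ the matrix entries $m_{i,j}$ are eventually zero in $i$). As a sanity check one can confirm that specializing $i=1$ reproduces the explicit values $a_{1,1}=21,\ a_{1,2}=10206,\dots$ and $b_{1,1}=162,\ b_{1,2}=21870,\dots$ recorded just before the lemma.
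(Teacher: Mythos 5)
Your derivation is correct: the paper itself states this lemma without proof (citing Kolitsch's Lemma 5), and your argument---linearity of $H_{3,2}$ applied to Lemma \ref{H2-xi-lem}, then using \eqref{m-vanish} to see that the terms with $j\le i$ vanish so the sum can be reindexed by $j\mapsto i+j$, matching the definitions \eqref{a-defn} and \eqref{b-defn}---is exactly the intended bookkeeping. One microscopic wording point: for the $\xi^{-4i+1}$ case with $i=1$ the bound $\left[(4i-1)/3\right]$ equals $i$ rather than exceeds it, but this still gives $m_{4i-1,\ell}=0$ for all $\ell\le i$, which is all the reindexing needs.
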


Now we define $\left(x_{i,j}\right)_{i,j\ge 1}$ by
\begin{align}\label{x-1st}
x_{1,1}=3, \quad x_{1,j}=0, \quad j\ge 2
\end{align}
and for $k\ge 1$,
\begin{align}
x_{2k,j}&=\sum_{i=1}^{\infty}x_{2k-1,i}b_{i,j}, \\
x_{2k+1,j}&=\sum_{i=1}^{\infty}x_{2k,i}a_{i,j}.
\end{align}

In order to prove \eqref{barcong}, Kolitsch \cite[Theorem 2]{Kolitsch-1} established formulas for $\sum_{n=0}^{\infty}\overline{c\phi}_{3}(3^{k}n+t_{k})q^n$. In view of \eqref{bar-relate}, his formulas can be restated as the following lemma.
\begin{lemma}\label{Kolitsch-gen}
For $k\ge 1$, we have
\begin{align}\label{odd}
\sum_{n=0}^{\infty}c\phi_{3}\Big(3^{2k-1}n+\frac{5\cdot 3^{2k-1}+1}{8}\Big)q^n =\frac{9T}{q^3E_{3}}\sum_{j=1}^{\infty}x_{2k-1,j}T^{-4j}q^{9j}\xi^{-4j}(\xi+9)
\end{align}
and
\begin{align}\label{even}
\sum_{n=0}^{\infty}{c\phi}_{3}\Big(3^{2k}n+\frac{7\cdot 3^{2k}+1}{8}\Big)q^n =\frac{9}{qE_{3}}\sum_{j=1}^{\infty}x_{2k,j}T^{-4j}q^{9j}\xi^{-4j-1}(\xi+9).
\end{align}
\end{lemma}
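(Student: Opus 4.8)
\textbf{Proof proposal for Lemma \ref{Kolitsch-gen}.}

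The plan is to reconstruct Kolitsch's formulas by iterating the action of $H_{3,2}$ on the generating function \eqref{gen}, tracking the bookkeeping through the matrices $a_{i,j}$, $b_{i,j}$ and $x_{i,j}$. The starting point is to rewrite the Lambert-series part of \eqref{gen} in terms of $\xi$, $T$ and $S$. First I would recall that $c\phi_{3}(3n+2)$ is captured by applying $H_{3,2}$ to \eqref{gen}: one checks that $H_{3,2}$ annihilates the ``$1$'' in \eqref{gen} and acts on the Lambert sum in a way that, after using the standard $3$-dissection of $1/(q;q)_\infty^3$ and Jacobi-type identities, produces an expression of the shape $\frac{9T}{q^3 E_3}\,\xi^{-4}(\xi+9)$ up to the constant $x_{1,1}=3$ recorded in \eqref{x-1st}. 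This is precisely the $k=1$ case of \eqref{odd}, so the base case of the induction should come down to a single explicit dissection computation combined with \eqref{modeq}.

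Next I would set up the inductive engine. Suppose \eqref{odd} holds for a given odd level $2k-1$. Applying $H_{3,2}$ to $\sum c\phi_3(3^{2k-1}n+\cdots)q^n$ turns each summand $T^{-4j}q^{9j}\xi^{-4j}(\xi+9) = T^{-4j}q^{9j}(\xi^{-4j+1}+9\xi^{-4j})$ into something governed by the second formula in Lemma \ref{H2-lem}, since that lemma evaluates exactly $H_{3,2}(\xi^{-4i+1}+9\xi^{-4i})$. Collecting terms, the coefficient of $T^{4i+4j}q^{-9i-9j}$ involves $\sum_i x_{2k-1,i} b_{i,j}$, which is the definition of $x_{2k,j}$; after simplifying the prefactor $S/q$ against $9T/(q^3E_3)$ and the powers of $T$, one arrives at \eqref{even} with index $2k$. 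The key point is that $H_{3,2}$ maps the residue class $3^{2k-1}n + \frac{5\cdot 3^{2k-1}+1}{8}$ to $3^{2k}n + \frac{7\cdot 3^{2k}+1}{8}$, which is a routine check with the arithmetic progressions, using $8^{-1} \equiv t_k \pmod{3^k}$. Symmetrically, applying $H_{3,2}$ to \eqref{even} and invoking the \emph{first} formula in Lemma \ref{H2-lem} (the one for $H_{3,2}(9\xi^{-4i-1}+\xi^{-4i})$) together with the recursion $x_{2k+1,j} = \sum_i x_{2k,i} a_{i,j}$ returns \eqref{odd} at level $2k+1$, completing the induction.

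There are two places where care is needed. The first is the precise normalization: matching the scalar factors $9$, the powers of $q$, and the role of $S$ and $T$ so that the telescoping between consecutive applications of $H_{3,2}$ is exact rather than merely up to a constant; this is where the definitions \eqref{a-defn}, \eqref{b-defn} must be used in exactly the form Lemma \ref{H2-lem} provides, and where one uses that $H_{3,2}(S \cdot F(q^9)) = S \cdot H_{3,2}(F)$-type identities hold because $S$ and $T$ are (essentially) series in $q^3$ after the appropriate rescaling. The second is verifying that the finiteness \eqref{m-vanish}, hence the finiteness of each row of $a_{i,j}$, $b_{i,j}$, and therefore of $x_{i,j}$, legitimizes all the interchanges of summation; this is immediate from $m_{i,j}=0$ for $i\ge 3j$. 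I expect the main obstacle to be the base case: extracting the $H_{3,2}$-component of \eqref{gen} and massaging the resulting theta quotients into the closed form $\frac{9T}{q^3E_3}\xi^{-4}(\xi+9)$ requires the correct $3$-dissections of the relevant eta-quotients and a careful application of \eqref{modeq}, after which the inductive step is essentially formal. Since this lemma is quoted as a restatement of \cite[Theorem 2]{Kolitsch-1} via \eqref{bar-relate}, it suffices to note that Kolitsch's generating-function identities for $\overline{c\phi}_3(3^k n + t_k)$ combined with $\overline{c\phi}_3(n) = c\phi_3(n)$ for $3\nmid n$ (which applies here since $\frac{5\cdot 3^{2k-1}+1}{8}$ and $\frac{7\cdot 3^{2k}+1}{8}$ are both coprime to $3$) yield \eqref{odd} and \eqref{even} directly.
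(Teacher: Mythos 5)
Your proposal is correct and takes essentially the paper's route: the paper justifies this lemma exactly as in your closing paragraph, i.e.\ as a restatement of Kolitsch's Theorem 2 via \eqref{bar-relate}, since $\frac{5\cdot 3^{2k-1}+1}{8}$ and $\frac{7\cdot 3^{2k}+1}{8}$ are the inverses of $8$ modulo $3^{2k-1}$ and $3^{2k}$ and are coprime to $3$, so $\overline{c\phi}_3=c\phi_3$ on these progressions. Your induction sketch is the same $H_{3,2}$/Lemma \ref{H2-lem} machinery that Kolitsch uses and that the paper itself carries out for the analogous Lemma \ref{a-gen}; the only minor imprecision is that in the even-to-odd step one extracts the terms whose exponent is congruent to $1$ modulo $3$ in the outer series (the operator $H_{3,2}$ acts only on the $\xi$-powers, the prefactor already contributing exponents $\equiv 2 \pmod 3$), which does not affect the argument.
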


For any integer $n$, let $\pi(n)$ denote the 3-adic order of $n$ and we agree that $\pi(0)=\infty$. For any real number $x$, we denote by $[x]$ the integer part of $x$. In order to establish the congruences \eqref{cphi3-cong1} and \eqref{cphi3-cong2}, Kolitsch \cite{Kolitsch-1} examined the 3-adic orders of $x_{i,j}$ for $i,j\ge 1$.
\begin{lemma}\label{Kolitsch-adic}
(Cf. \cite[Lemma 9]{Kolitsch-1}.) We have
\begin{align}
\pi(x_{1,1})&=1, \\
\pi(x_{2k,j})&\ge 4k+\left[\frac{9j-9}{2}\right], \\
\pi(x_{2k+1,j})&\ge 4k+1+\left[\frac{9j-8}{2}\right].
\end{align}
\end{lemma}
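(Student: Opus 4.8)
The statement to prove is Lemma \ref{Kolitsch-adic}, giving lower bounds on the $3$-adic orders $\pi(x_{i,j})$. The plan is to induct on the first index, using the recursions $x_{2k,j}=\sum_i x_{2k-1,i}b_{i,j}$ and $x_{2k+1,j}=\sum_i x_{2k,i}a_{i,j}$ together with $3$-adic lower bounds for the entries $a_{i,j}$ and $b_{i,j}$. So the first step is to establish auxiliary estimates of the form $\pi(a_{i,j})\ge \alpha(i,j)$ and $\pi(b_{i,j})\ge \beta(i,j)$; since $a_{i,j}$ and $b_{i,j}$ are built from the matrix $(m_{i,j})$ via \eqref{a-defn}--\eqref{b-defn}, and $(m_{i,j})$ satisfies the recursion $m_{i,j}=m_{i-3,j-1}+9m_{i-2,j-1}+27m_{i-1,j-1}$ with the given seeds, one first proves by induction on $i$ a bound $\pi(m_{i,j})\ge f(i,j)$ for a suitable piecewise-linear function $f$. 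The vanishing fact \eqref{m-vanish} ($m_{i,j}=0$ for $i\ge 3j$) is what makes the sums in the recursions finite and also forces the relevant $f(i,j)$ to blow up appropriately as $i/j$ grows.

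The key numerics driving everything are: from the seed data one has $\pi(x_{1,1})=1$ exactly (given by \eqref{x-1st}, $x_{1,1}=3$), and from the explicit values $a_{1,1}=21,\ a_{1,2}=10206=2\cdot 3^6\cdot 7,\ a_{1,3}=767637,\ a_{1,4}=14348907=3^{15}$, and $b_{1,1}=162=2\cdot 3^4,\ b_{1,2}=21870=2\cdot 3^7\cdot 5,\ b_{1,3}=531441=3^{12}$, one reads off the base-case $3$-adic orders. The induction step is then a bookkeeping argument: assuming $\pi(x_{2k-1,i})\ge 4(k-1)+1+[\tfrac{9i-8}{2}]$ for all $i$, one computes
\[
\pi(x_{2k,j})\ge \min_{i}\Big(\pi(x_{2k-1,i})+\pi(b_{i,j})\Big)\ge \min_{i}\Big(4k-3+\big[\tfrac{9i-8}{2}\big]+\beta(i,j)\Big),
\]
and checks that the minimum over the (finitely many, thanks to \eqref{m-vanish}) contributing indices $i$ is at least $4k+[\tfrac{9j-9}{2}]$; symmetrically for $x_{2k+1,j}$ using the $a$-bounds. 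The main obstacle will be pinning down the correct auxiliary function $f(i,j)$ (equivalently the sharp bounds $\alpha(i,j),\beta(i,j)$) so that the floor-function inequalities close up exactly — the floors $[\tfrac{9j-9}{2}]$ and $[\tfrac{9j-8}{2}]$ alternate parity behavior, so one must carefully track the parity of $j$ (and of the summation index $i$) when combining the contributions, and verify that the term achieving the minimum is the one with the smallest eligible $i$.

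A cleaner route, which is presumably what Kolitsch does and what I would follow in the write-up, is to first prove the single unified claim $\pi(m_{i,j})\ge \big[\tfrac{3j-i+ \text{(small correction)}}{2}\big]$-type bound by induction on $i$ using the three-term recursion in (4) (the $27=3^3$ and $9=3^2$ coefficients supply extra powers of $3$ exactly when the "shifted" index $i-1$ or $i-2$ is large relative to $j-1$), then feed this into \eqref{a-defn}--\eqref{b-defn} to get $\alpha,\beta$, and finally run the two-step induction on the parity of the first index of $x$. Throughout, \eqref{m-vanish} is used repeatedly both to truncate sums and to discard index ranges where the claimed bound would otherwise fail; the base cases $k=1$ are exactly the explicit computations listed before Lemma \ref{H2-lem} and in \eqref{x-1st}. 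I expect the verification to be entirely elementary once the auxiliary $m$-bound is correctly formulated — the only real danger is an off-by-one in the floor functions, which a couple of small sanity checks against the tabulated $a_{1,j}, b_{1,j}$ values will catch.
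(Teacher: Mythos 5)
Your strategy is the right one, and it is essentially what the paper relies on: the paper does not reprove Lemma \ref{Kolitsch-adic} at all (it simply cites Kolitsch's Lemma 9), but the same two-step induction driven by $3$-adic bounds on $a_{i,j}$ and $b_{i,j}$ is exactly how the paper proves the analogous Lemma \ref{yzw-adic}. However, as written your proposal has a genuine gap: the entire quantitative content of the lemma sits in the auxiliary bounds $\alpha(i,j)$, $\beta(i,j)$ (equivalently the bound on $\pi(m_{i,j})$), and you leave these unspecified, saying only that ``a suitable piecewise-linear function $f$'' exists and that pinning it down is ``the main obstacle.'' Without stating and proving those bounds nothing is proved, since the first-row values $a_{1,j}$, $b_{1,j}$ only carry you through $x_{2,j}=3b_{1,j}$; from $x_{3,j}$ onward every row of $(a_{i,j})$ and $(b_{i,j})$ enters. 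The bounds that make the induction close are those of Lemma \ref{ab-adic} (Kolitsch's Lemma 8): $\pi(a_{i,j})\ge\left[\frac{9j-3i-3}{2}\right]$ and $\pi(b_{i,j})\ge\left[\frac{9j-3i}{2}\right]$, which indeed follow from a corresponding estimate for $\pi(m_{i,j})$ proved by induction on $i$ from the recursion $m_{i,j}=m_{i-3,j-1}+9m_{i-2,j-1}+27m_{i-1,j-1}$ together with \eqref{m-vanish}; your write-up needs to state and verify this explicitly rather than defer it.

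Once those bounds are in hand, the rest closes exactly as you sketch, and the minimization check you mention but do not carry out is short: assuming $\pi(x_{2k-1,i})\ge 4k-3+\left[\frac{9i-8}{2}\right]$, one gets $\pi(x_{2k,j})\ge\min_{i\ge 1}\bigl(4k-3+\left[\frac{9i-8}{2}\right]+\left[\frac{9j-3i}{2}\right]\bigr)$; increasing $i$ by $1$ raises the first floor by at least $4$ and lowers the second by at most $2$, so the minimum occurs at $i=1$ and equals $4k-3+\left[\frac{9j-3}{2}\right]=4k+\left[\frac{9j-9}{2}\right]$ (this monotonicity argument is precisely the one the paper writes out for Lemma \ref{yzw-adic}). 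The odd step is identical, giving $4k+\left[\frac{9j-6}{2}\right]=4k+1+\left[\frac{9j-8}{2}\right]$. Two small corrections to your narrative: no parity bookkeeping is actually needed, because in both steps the floor shifts by an exact integer; and \eqref{m-vanish} serves only to make the sums finite (and to kill $a_{i,1}$ for $i\ge 3$, $b_{i,1}$ for $i\ge 4$), not to ``discard index ranges where the claimed bound would otherwise fail.'' The base case $\pi(x_{1,1})=1$ is immediate from \eqref{x-1st}.
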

It is then clear that when $j\ge 2$,
\begin{align}\label{xk2-order}
\pi(x_{2k,j})\ge 4k+4, \quad \pi(x_{2k+1,j})\ge 4k+6.
\end{align}
To prove Theorems \ref{thm-specific} and \ref{thm-general}, we need to improve the estimate for $\pi(x_{k,1})$.
\begin{lemma}\label{refine-lem}
For $k\ge 3$, we have
\begin{align}
\pi(x_{2k,1})&\ge 4k+3, \label{x-even}\\
\pi(x_{2k+1,1})&\ge 4k+4. \label{x-odd}
\end{align}
\end{lemma}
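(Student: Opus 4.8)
The plan is to track the $3$-adic orders more carefully than in Lemma~\ref{Kolitsch-adic} by isolating the contributions to $x_{2k,1}$ and $x_{2k+1,1}$ coming only from the ``first column'' index $j=1$ of the preceding vector, and showing that the other contributions are already divisible by a sufficiently high power of $3$. Concretely, from the recursions
\[
x_{2k,1}=\sum_{i\ge 1}x_{2k-1,i}\,b_{i,1},\qquad
x_{2k+1,1}=\sum_{i\ge 1}x_{2k,i}\,a_{i,1},
\]
I would split each sum as the $i=1$ term plus the tail $i\ge 2$. For the tail, the estimate \eqref{xk2-order} gives $\pi(x_{2k-1,i})\ge 4(k-1)+6=4k+2$ for $i\ge 2$, and since every $b_{i,1},a_{i,1}$ is an integer (in fact divisible by appropriate powers of $3$, as one checks from \eqref{a-defn}--\eqref{b-defn} and \eqref{m-vanish}), the tail of $x_{2k,1}$ has $3$-adic order $\ge 4k+2$; similarly the tail of $x_{2k+1,1}$ has order $\ge 4k+4$ using $\pi(x_{2k,i})\ge 4k+4$ for $i\ge 2$. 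Thus it suffices to control the single leading terms $x_{2k-1,1}b_{1,1}$ and $x_{2k,1}a_{1,1}$, and in fact it will be convenient to prove the pair of bounds \eqref{x-even}--\eqref{x-odd} simultaneously by induction on $k$, carrying along whatever refined information about $\pi(x_{2k-1,1})$ and $\pi(x_{2k,1})$ is needed to close the loop.

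The key arithmetic inputs are the exact $3$-adic orders of the entries $a_{1,1}=21$ and $b_{1,1}=162$: we have $\pi(b_{1,1})=\pi(2\cdot 3^4)=4$ and $\pi(a_{1,1})=\pi(3\cdot 7)=1$. Hence the leading term of $x_{2k,1}$ satisfies $\pi(x_{2k-1,1}b_{1,1})=\pi(x_{2k-1,1})+4$, and the leading term of $x_{2k+1,1}$ satisfies $\pi(x_{2k,1}a_{1,1})=\pi(x_{2k,1})+1$. If the induction hypothesis supplies $\pi(x_{2k-1,1})\ge 4(k-1)+4=4k$, then the leading term of $x_{2k,1}$ has order $\ge 4k+4\ge 4k+3$, and combined with the tail bound $\ge 4k+2$ we conclude $\pi(x_{2k,1})\ge 4k+3$, which is \eqref{x-even}. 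Feeding this back, the leading term of $x_{2k+1,1}$ has order $\ge (4k+3)+1=4k+4$, and together with the tail bound $\ge 4k+4$ we get $\pi(x_{2k+1,1})\ge 4k+4$, which is \eqref{x-odd}; but $4k+4=4(k+1)$, which is exactly the hypothesis needed at the next stage, so the induction propagates. The base case $k=3$ would be checked either from the explicit data for $a_{1,j},b_{1,j}$ together with $x_{1,1}=3$ by computing $x_{2},x_{3},x_{4},x_{5},x_{6},x_{7}$ directly and reading off their $3$-adic orders, or from the already-established bounds in Lemma~\ref{Kolitsch-adic} specialized to small $k$.

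The main obstacle, and the point that needs care, is verifying that the tail contributions really are as small as claimed: this requires knowing not just $\pi(x_{2k-1,i})$ for $i\ge 2$ (which \eqref{xk2-order} provides) but also that the matrix entries $b_{i,1}$ and $a_{i,1}$ for $i\ge 2$ do not themselves carry negative information — of course they are nonnegative integers, so this is automatic, but one should double-check using \eqref{m-vanish} that for fixed column $j=i+1$ only finitely many rows contribute, so each $x_{2k,1}$ and $x_{2k+1,1}$ is a genuine finite sum. A secondary subtlety is the interplay in the base case: for $k=3$ one must make sure the required hypothesis $\pi(x_{5,1})\ge 12$ is actually available, which again follows from Lemma~\ref{Kolitsch-adic} (giving $\pi(x_{5,1})\ge 9$ only, so here one does need the explicit computation) — so in practice the cleanest route is to compute $x_{k,1}$ explicitly for $k$ up to $7$ or so and then switch to the inductive step. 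I do not expect any genuinely hard estimate beyond this bookkeeping; the whole lemma is a refinement of Kolitsch's Lemma~\ref{Kolitsch-adic} obtained by not discarding the ``$+4$'' and ``$+1$'' gained from $\pi(b_{1,1})$ and $\pi(a_{1,1})$.
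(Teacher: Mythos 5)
Your overall strategy coincides with the paper's: use \eqref{m-vanish} to see that $a_{i,1}=0$ for $i\ge 3$ and $b_{i,1}=0$ for $i\ge 4$, so that the recursions collapse to the finite sums
\[
x_{2k,1}=162\,x_{2k-1,1}+30\,x_{2k-1,2}+x_{2k-1,3},\qquad x_{2k+1,1}=21\,x_{2k,1}+x_{2k,2},
\]
then compute the first few $x_{k,1}$ and induct. The odd step of your induction is fine. The genuine gap is in the even step: you bound the tail $i\ge 2$ using only the flat estimate \eqref{xk2-order}, namely $\pi(x_{2k-1,i})\ge 4k+2$, together with bare integrality of $b_{2,1},b_{3,1}$, which gives tail order $\ge 4k+2$; from ``leading term $\ge 4k+4$, tail $\ge 4k+2$'' you then assert $\pi(x_{2k,1})\ge 4k+3$, but the minimum of those two bounds is $4k+2$, so \eqref{x-even} does not follow. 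Your parenthetical hope that the $b_{i,1}$ are ``divisible by appropriate powers of $3$'' fails exactly where it matters, since $b_{3,1}=1$. To recover the missing power of $3$ you must invoke the $j$-dependence of Lemma \ref{Kolitsch-adic} rather than \eqref{xk2-order}: $\pi(x_{2k-1,2})\ge 4k+2$ combined with $\pi(b_{2,1})=\pi(30)=1$ gives order $\ge 4k+3$ for the $i=2$ term, and $\pi(x_{2k-1,3})\ge 4k+6$ handles the $i=3$ term. This is precisely how the paper argues, via $\pi(x_{2k+2,1})\ge\min\{4+\pi(x_{2k+1,1}),\,1+\pi(x_{2k+1,2}),\,\pi(x_{2k+1,3})\}\ge 4k+7$. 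The shortfall by one power of $3$ recurs at every even step of your scheme, so it is not a one-off slip.

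A secondary point: your base case is mis-calibrated. You ask for $\pi(x_{5,1})\ge 12$, which is both unnecessary and apparently unattainable: the asserted sharpness of \eqref{thmcong-8}--\eqref{thmcong-10} forces $\pi(x_{4,1})=10$, whence $\pi(x_{5,1})=\pi(21x_{4,1}+x_{4,2})=11$. Once the tail is estimated correctly, the leading term only needs order $\ge 4k+3$, so $\pi(x_{2k-1,1})\ge 4k-1$ suffices; the paper's base case indeed gets $\pi(x_{6,1})\ge 15$ from the chain $x_{2,1}=2\cdot 3^5$, $\pi(x_{3,1})\ge 6$, $\pi(x_{4,1})\ge 10$, $\pi(x_{5,1})\ge 11$, using Lemma \ref{Kolitsch-adic} for the off-diagonal entries, with no further explicit computation of $x_{6},x_{7}$ required.
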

\begin{proof}
By computations we find that
\[a_{2,1}=1, \quad b_{2,1}=30, \quad b_{3,1}=1.\]
By \eqref{a-defn} we have
\begin{align}
a_{i,1}=9m_{4i+1,i+1}+m_{4i,i+1}.
\end{align}
By \eqref{m-vanish} we know that $a_{i,1}=0$ if $i\ge 3$.
Similarly by \eqref{b-defn} we have
\begin{align}
b_{i,1}=m_{4i-1,i+1}+9m_{4i,i+1}.
\end{align}
If $i\ge 4$, then by \eqref{m-vanish} we know $b_{i,1}=0$.

By definition we have
\begin{align}\label{x-rec1}
x_{2k,1}=162x_{2k-1,1}+30x_{2k-1,2}+x_{2k-1,3}
\end{align}
and
\begin{align}\label{x-rec2}
x_{2k+1,1}=21x_{2k,1}+x_{2k,2}.
\end{align}
Hence $x_{2,1}=2\cdot 3^5$ and
\begin{align}\label{x31}
\pi(x_{3,1})\ge \min\{1+\pi(x_{2,1}),\pi(x_{2,2})\}.
\end{align}
From \eqref{xk2-order} we see that $\pi(x_{2,2})\ge 8$. Hence \eqref{x31} implies $\pi(x_{3,1})\ge 6$. Next, using Lemma \ref{Kolitsch-adic} and \eqref{x-rec1} we see that
\begin{align}\label{x41}
\pi(x_{4,1})\ge \min\{4+\pi(x_{3,1}), 1+\pi(x_{3,2}), \pi(x_{3,3})\}\ge 10.
\end{align}
In the same way, we can prove that $\pi(x_{5,1})\ge 11$ and $\pi(x_{6,1})\ge 15$. Thus \eqref{x-even} is true for $k=3$.

Suppose \eqref{x-even} is true for some $k\ge 3$.
By \eqref{x-rec2} and \eqref{xk2-order} we deduce that
\begin{align*}
\pi(x_{2k+1,1})\ge \min\{1+\pi(x_{2k,1}),\pi(x_{2k,2})\} \ge 4k+4.
\end{align*}
Hence \eqref{x-odd} is true for $k$. By \eqref{x-rec1} and Lemma \ref{Kolitsch-adic} we deduce that
\begin{align*}
\pi(x_{2k+2,1})\ge \min\{4+\pi(x_{2k+1,1}), 1+\pi(x_{2k+1,2}), \pi(x_{2k+1,3})\} \ge 4k+7.
\end{align*}
This implies that \eqref{x-even} is true for $k+1$. By induction on $k$ we complete the proof of Lemma \ref{refine-lem}.
\end{proof}

Now we are able to prove Theorems \ref{thm-specific} and \ref{thm-general}.
\begin{proof}[Proof of Theorem \ref{thm-specific}]

From \cite[Theorem 1]{Kolitsch} we have
\[\overline{c\phi}_{3}(n)\equiv 0 \pmod{3^2}.\]
Since $c\phi_{3}(n)=c\phi_{3}(n)$ when $n$ is not divisible by 3, congruence \eqref{thmcong-1} follows.

Next,  letting $k=1$ in \eqref{odd}, from \eqref{x-1st} we deduce that
\[c\phi_{3}(3n+2) \equiv 0 \pmod{3^3}.\]
By \eqref{odd} and the binomial theorem, we have
\begin{align}
\sum_{n=0}^{\infty}c\phi_{3}(3n+2)q^n&=27\Big(\frac{E_{3}^8}{E_{1}^9}+9q\frac{E_{3}^8E_{9}^3}{E_{1}^{12}} \Big) \label{cphi-3-3n2} \\
&\equiv 27E_{3}^5 \pmod{3^5}.
\end{align}
Since the terms of the form $q^{3n+1}$ do not appear in the series expansion of $E_{3}^5$, we deduce that
\[c\phi_{3}(9n+5) \equiv 0 \pmod{3^5}.\]

In the proof of Lemma \ref{refine-lem} we have seen that $x_{2,1}=2\cdot 3^{5}$.  Moreover by \eqref{xk2-order} we know that $\pi(x_{2,j}) \ge 8$ for $j\ge 2$. Therefore by letting $k=1$ in \eqref{even} we deduce that
\[\sum_{n=0}^{\infty}c\phi_{3}(9n+8)q^n\equiv 2\cdot 3^7\Big( \frac{E_{3}^{11}}{E_{1}^{12}}+9q\frac{E_{3}^{11}E_{9}^{3}}{E_{1}^{15}}\Big) \pmod{3^{10}}.\]
It follows that
\[c\phi_{3}(9n+8) \equiv 0 \pmod{3^7}.\]
Moreover, by the binomial theorem, we deduce that
\begin{align}\label{cphi9n8}
\sum_{n=0}^{\infty}c\phi_{3}(9n+8)q^n\equiv 2\cdot 3^7\frac{E_{3}^{11}}{E_{1}^{12}}\equiv 2\cdot 3^7 \frac{E_{3}^8}{E_{1}^{3}} \pmod{3^{9}}.
\end{align}
From \cite[Lemma 2.6]{WangIJNT} we find
\begin{align}\label{3-dissection}
\frac{1}{E_{1}^3}=\frac{E_{9}^3}{E_{3}^{12}}\Big(E_{3}^2a^2(q^3)+3qE_{3}a(q^3)E_{9}^3+9q^2E_{9}^6\Big),
\end{align}
where
\begin{align}\label{aq-defn}
a(q)=\Big(1+6\sum_{n=0}^{\infty}\big(\frac{q^{3n+1}}{1-q^{3n+1}}-\frac{q^{3n+2}}{1-q^{3n+2}} \big)\Big).
\end{align}
Using \eqref{3-dissection}, extracting the terms on both sides of \eqref{cphi9n8} in which the exponent of $q$ is congruent to 2 modulo 3, we deduce that
\[c\phi_{3}(27n+26) \equiv 0 \pmod{3^9}.\]
Similarly, from \eqref{cphi9n8} and \eqref{3-dissection} we deduce that
\[c\phi_{3}(27n+17) \equiv 0 \pmod{3^{8}}.\]

In the proof of Lemma \ref{refine-lem} we have seen that $\pi(x_{3,1})\ge 6$. This together with \eqref{xk2-order} and \eqref{odd} imply that
\begin{align}\label{27n}
\sum_{n=0}^{\infty}c\phi_{3}(27n+17)q^n\equiv 9x_{3,1}\Big(\frac{E_{3}^8}{E_{1}^9}+9q\frac{E_{3}^8E_{9}^3}{E_{1}^{12}}  \Big) \pmod{3^{12}}.
\end{align}
By the binomial theorem, we deduce that
\begin{align}
\sum_{n=0}^{\infty}c\phi_{3}(27n+17)q^n\equiv \frac{E_{3}^8}{E_{1}^9} \equiv 9x_{3,1}E_{3}^5 \pmod{3^{10}}.
\end{align}
Hence by extracting the terms in which the exponent of $q$ is congruent to 1 modulo 3, we obtain
\[c\phi_{3}(81n+44) \equiv 0 \pmod{3^{10}}.\]

Let $k=2$ in \eqref{even}. By \eqref{xk2-order}, \eqref{x41} and \eqref{3-dissection} we deduce that
\begin{align}
\sum_{n=0}^{\infty}c\phi_{3}(81n+71)q^n &\equiv 9x_{4,1}\Big(\frac{E_{3}^{11}}{E_{1}^{12}}+9q\frac{E_{3}^{11}E_{9}^3}{E_{1}^{15}} \Big) \nonumber\\
&\equiv 9x_{4,1}\frac{E_{3}^{11}}{E_{1}^{9}}\cdot \frac{1}{E_{1}^{3}} \nonumber \\
&\equiv 9x_{4,1}\frac{E_{9}^3}{E_{3}^4}\Big(E_{3}^2a^2(q^3)+3qE_{3}a(q^3)E_{9}^3+9q^2E_{9}^6\Big) \pmod{3^{14}}.\label{last}
\end{align}
It follows that
\[c\phi_{3}(81n+71) \equiv 0 \pmod{3^{12}}.\]
Moreover, extracting the terms in which the exponent of $q$ is congruent to 1 and 2 on both sides of \eqref{last}, we obtain \eqref{thmcong-9} and \eqref{thmcong-10}, respectively.
\end{proof}

\begin{proof}[Proof of Theorem \ref{thm-general}]
By Lemma \ref{Kolitsch-adic} we have $\pi(x_{2k,2})\ge 4k+4$ and
\[\pi(x_{2k,j})\ge 4k+9, \quad j\ge 3.\]
By \eqref{even} and \eqref{xk2-order} we deduce that
\begin{align}\label{even-start}
&\sum_{n=0}^{\infty}c\phi_{3}\Big(3^{2k}n+\frac{7\cdot 3^{2k}+1}{8} \Big)q^n \nonumber\\
\equiv & 9 x_{2k,1}\Big(\frac{E_{3}^{11}}{E_{1}^{12}}+9q\frac{E_{3}^{11}E_{9}^{3}}{E_{1}^{15}}\Big)+9x_{2k,2}q\frac{E_{3}^{23}}{E_{1}^{24}} \pmod{3^{4k+8}}.
\end{align}
For $k\ge 3$, by Lemma \ref{refine-lem} we have $\pi(x_{2k,1})\ge 4k+3$ and hence
\[c\phi_{3}\Big(3^{2k}n+\frac{7\cdot 3^{2k}+1}{8} \Big) \equiv 0 \pmod{3^{4k+5}}.\]
Moreover, by \eqref{even-start} and the binomial theorem, we deduce that
\begin{align*}
\sum_{n=0}^{\infty}c\phi_{3}\Big(3^{2k}n+\frac{7\cdot 3^{2k}+1}{8} \Big)q^n\equiv 9x_{2k,1}E_{3}^{7} \pmod{3^{4k+6}}.
\end{align*}
It follows that
\[c\phi_{3}\Big(3^{2k}(3n+1)+\frac{7\cdot 3^{2k}+1}{8} \Big) \equiv 0 \pmod{3^{4k+6}},\]
which proves \eqref{general-2}.

By \eqref{even-start} and the binomial theorem again, we have
\begin{align}\label{even-last}
\sum_{n=0}^{\infty}c\phi_{3}\Big(3^{2k}n+\frac{7\cdot 3^{2k}+1}{8} \Big)q^n\equiv 9x_{2k,1}\frac{E_{3}^{8}}{E_{1}^3}+9x_{2k,2}qE_{3}^{15} \pmod{3^{4k+7}}.
\end{align}
Now recalling \eqref{3-dissection} and extracting the terms on both sides of \eqref{even-last} in which the exponent of $q$ is congruent to 2 modulo 3, we obtain \eqref{general-3}.

By \eqref{xk2-order}, \eqref{x-odd} and \eqref{odd} we deduce that
\begin{align}\label{odd-start}
\sum_{n=0}^{\infty}c\phi_{3}\Big(3^{2k+1}n+\frac{5\cdot 3^{2k+1}+1}{8} \Big)q^n \equiv 9 x_{2k+1,1}\frac{E_{3}^8}{E_{1}^9} \equiv 9 x_{2k+1,1}E_{3}^5 \pmod{3^{4k+8}}.
\end{align}
Extracting the terms on both sides of \eqref{odd-start} in which the exponent of $q$ is congruent to 1 modulo 3, we obtain \eqref{general-4}.
\end{proof}

\section{Congruences Modulo Powers of 3 for $c\phi_{9}(n)$}\label{sec-cphi-9}

In this section, we present two different proofs for Theorems \ref{cphi9-thm-specific} and \ref{cphi9-thm-general}.

Our first proof is based on Kolitsch's relation \eqref{cphi-9-relate}. Kolitsch's original proof of \eqref{cphi-9-relate} uses combinatorial arguments. Here we will give a new proof by series manipulations.
\begin{lemma}\label{a-exp-lem}
We have
\begin{align}
a(q)=\frac{E_{1}^3}{E_{3}}+9q\frac{E_{9}^3}{E_{3}} \label{a-new-exp}
\end{align}
and
\begin{align}
a^3(q)=\frac{E_{1}^9}{E_{3}^3}+27q\frac{E_{3}^9}{E_{1}^3}. \label{a-cubic}
\end{align}
\end{lemma}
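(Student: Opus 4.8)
The plan is to prove the two identities in Lemma~\ref{a-exp-lem} by relating $a(q)$ to standard theta functions and then using well-known $3$-dissection formulas. Recall that $a(q)$ is (up to normalization) the cubic theta function $\sum_{m,n\in\mathbb{Z}}q^{m^2+mn+n^2}$, which from the $k=2$ case of Andrews' formula for $\mathrm{C}\Phi_k(q)$ equals $(q;q)_\infty^2\,\mathrm{C}\Phi_2(q)$ times an elementary factor; more directly, the Lambert-series definition \eqref{aq-defn} is the classical expansion of this cubic theta function. So the first step is to record the identity $a(q)=\sum_{m,n}q^{m^2+mn+n^2}$, which I would either cite from the literature on the cubic theta functions (Borwein--Borwein) or verify by matching the Lambert series.

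Next I would invoke the standard $3$-dissection of $a(q)$: it is classical that $a(q)=a(q^3)+6q\,b(q^3)$ fails, but the correct and well-known decomposition is $a(q)=\dfrac{E_1^3}{E_3}+9q\dfrac{E_9^3}{E_3}$ when one groups terms of $a(q)$ according to whether $m^2+mn+n^2$ lies in a fixed residue class — equivalently this is the identity $a(q)=c(q^{1/?})$... rather than reinvent this, I would note that \eqref{a-new-exp} is exactly the combination of the two Borwein theta identities $a(q)=\theta_3(q)\theta_3(q^3)+\theta_2(q)\theta_2(q^3)$ recast via eta-quotients, or simply derive it by extracting residues of $q$ modulo $3$ on both sides of \eqref{3-dissection} after clearing denominators. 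In fact the cleanest route: take \eqref{3-dissection}, multiply through by $E_3^{12}/E_9^3$, and compare with the known product formula for $1/E_1^3$; matching the piece independent of the internal $q$-shift isolates $a^2(q^3)$, and taking square roots (valid since $a(q)$ has constant term $1$) combined with the classical product formula for $a(q)$ gives \eqref{a-new-exp}. Alternatively, and most self-containedly, one can start from the Jacobi triple product expansions of the two eta quotients on the right-hand side of \eqref{a-new-exp}, expand, and check that the coefficient of $q^n$ matches the Lambert-series coefficient of $a(q)$, i.e.\ $6\sum_{d\mid n,\,3\nmid d}\left(\frac{d}{3}\right)$ for $n\ge 1$ and $1$ for $n=0$; this is a finite verification of a theta identity.

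For the cubic identity \eqref{a-cubic}, the plan is to cube \eqref{a-new-exp} directly. Writing $A=\dfrac{E_1^3}{E_3}$ and $B=9q\dfrac{E_9^3}{E_3}$, we get $a^3(q)=A^3+3A^2B+3AB^2+B^3$. Here $A^3=\dfrac{E_1^9}{E_3^3}$ and $B^3=729q^3\dfrac{E_9^9}{E_3^3}$ are already in the desired shape, so the content is to show $3A^2B+3AB^2+B^3-729q^3\dfrac{E_9^9}{E_3^3}=27q\dfrac{E_3^9}{E_1^3}$, i.e.\ that the ``mixed'' terms collapse to a single eta-quotient. Factoring out $3AB = 27q\,\dfrac{E_1^3E_9^3}{E_3^2}$ from $3A^2B+3AB^2$ reduces this to showing $27q\dfrac{E_1^3E_9^3}{E_3^2}\left(\dfrac{E_1^3}{E_3}+9q\dfrac{E_9^3}{E_3}\right)+729q^3\dfrac{E_9^9}{E_3^3}=27q\dfrac{E_3^9}{E_1^3}$; multiplying through by $E_3^3/(27q)$ turns this into precisely the modular equation of order $3$ from Lemma~\ref{modular}, namely \eqref{modeq}: $E_1^9E_9^3+9qE_1^6E_9^6+27q^2E_1^3E_9^9=E_3^{12}$ (after multiplying by the extra factor $E_1^3$ and rearranging). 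So \eqref{a-cubic} is equivalent to \eqref{modeq}, which is already available.

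The main obstacle is establishing \eqref{a-new-exp} rigorously, since everything else is formal manipulation built on Lemma~\ref{modular}. The cleanest resolution is to observe that \eqref{a-new-exp} is itself a consequence of \eqref{3-dissection}: the factor $a^2(q^3)$ appearing there, together with the classical product formula $a(q)^2 = \dfrac{E_1^{?}}{\cdots}$, pins down $a(q)$ up to sign, and the sign is fixed by the constant term. If one prefers to avoid invoking external product formulas for $a(q)$, the alternative is the direct coefficient comparison via Jacobi triple product sketched above; this is routine but slightly lengthy, so in the write-up I would present it as: expand both eta-quotients on the right of \eqref{a-new-exp} as theta series, multiply, and identify the resulting two-variable sum with $\sum_{m,n}q^{m^2+mn+n^2}$ after the standard change of variables $m\mapsto$ linear combination, then recognize the latter as the Lambert series \eqref{aq-defn}. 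Once \eqref{a-new-exp} is in hand, \eqref{a-cubic} follows in a few lines from Lemma~\ref{modular} as indicated.
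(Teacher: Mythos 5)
Your handling of \eqref{a-cubic} is correct and is essentially the paper's own (alternative) argument: writing $a^3(q)=(A+B)^3$ with $A=E_1^3/E_3$, $B=9qE_9^3/E_3$ and observing that $3A^2B+3AB^2+B^3=27qE_3^9/E_1^3$ is, after clearing denominators, exactly the modular equation \eqref{modeq} of Lemma~\ref{modular}. The genuine gap is in \eqref{a-new-exp}, where none of your sketched routes is actually a proof and two are wrong as stated. (i) Grouping the terms of $\sum_{m,n}q^{m^2+mn+n^2}$ by the residue class of the exponent modulo $3$ does not produce \eqref{a-new-exp}: that dissection gives $a(q)=a(q^3)+6q\,E_9^3/E_3$ (the class-$2$ part being empty), whereas $E_1^3/E_3=1-3q+6q^3-3q^4+\cdots$ has nonzero coefficients in both classes $0$ and $1$, so \eqref{a-new-exp} is not a dissection identity; similarly $a(q)=\theta_3(q)\theta_3(q^3)+\theta_2(q)\theta_2(q^3)$ is a different decomposition and is not \eqref{a-new-exp} ``recast via eta-quotients''. (ii) The route through \eqref{3-dissection} is incomplete/circular: $a(q)$ has no eta-product formula to ``combine with'' after extracting a square root, and computing the exponent-$\equiv 0$ part of $E_3^{12}/(E_1^3E_9^3)$ in closed form is of essentially the same depth as the identity you want. (iii) Matching the coefficient of $q^n$ for all $n$ is not a ``finite verification'' unless you first place both sides in a finite-dimensional space of modular forms and invoke a Sturm-type bound (as the paper does only in its Remark about $F_1=F_2$); moreover, expanding $E_1^3/E_3$ and $3qE_9^3/E_3$ as two-variable theta series is precisely the nontrivial Borwein $b(q)$, $c(q)$ identities you would be presupposing.

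The repair is short and is what the paper does: quote from Borwein--Borwein--Garvan the two known relations $a(q)=3a(q^3)-2E_1^3/E_3$ and $a(q)=a(q^3)+6qE_9^3/E_3$, and eliminate $a(q^3)$; this yields \eqref{a-new-exp} in one line (equivalently, cite the relation $a(q)=b(q)+3c(q^3)$ directly from that paper). With \eqref{a-new-exp} secured by such a citation, your reduction of \eqref{a-cubic} to \eqref{modeq} goes through verbatim and the lemma is proved.
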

\begin{proof}
It was proved in \cite[Eqs.\ (2.3), (2.4), (2.11)]{BBG} that
\begin{align}
a(q)=3a(q^3)-2\frac{E_{1}^3}{E_{3}}, \label{a-1} \\
a(q)=a(q^3)+6q\frac{E_{9}^3}{E_{3}}. \label{a-2}
\end{align}
By eliminating $a(q^3)$ from \eqref{a-1} and \eqref{a-2}, we obtain \eqref{a-new-exp}. 

The identity \eqref{a-cubic} follows from \cite[Proposition 2.2, Theorem 2.3]{BBG}. It also follows by taking the cubic power on both sides of \eqref{a-new-exp} and then using \eqref{modeq} to do simplifications.
\end{proof}
\begin{proof}[Proof of \eqref{cphi-9-relate}]
It suffices to show that
\begin{align}
\sum_{n=0}^{\infty}c\phi_{9}(n)q^n=\sum_{n=0}^{\infty}c\phi_{3}\left(\frac{n}{3} \right)q^n+3\sum_{n=0}^{\infty}c\phi_{3}(3n-1)q^n. \label{relate-start}
\end{align}
Using \eqref{a-new-exp} and \eqref{gen} we obtain
\begin{align}
\sum_{n=0}^{\infty}c\phi_{3}(n)q^n=\frac{1}{E_3}+9q\frac{E_{9}^3}{E_{1}^3E_3}
\end{align}
Combining this identity with \eqref{cphi-3-3n2} we deduce that
\begin{align}
\sum_{n=0}^{\infty}c\phi_{3}\left(\frac{n}{3} \right)q^n+3\sum_{n=0}^{\infty}c\phi_{3}(3n-1)q^n=\frac{1}{E_{9}}+9q^3\frac{E_{27}^3}{E_{3}^3E_{9}}+81q\frac{E_{3}^8}{E_{1}^9}+3^6q^2\frac{E_{3}^8E_{9}^3}{E_{1}^{12}}. \label{sum-gen}
\end{align}
Multiplying by $E_{1}^9$ on both sides of \eqref{cphi9-gen} and \eqref{sum-gen}, we know that \eqref{relate-start} is equivalent to
\begin{align}
F_1(q)=F_2(q), \label{equivalent}
\end{align}
where
\begin{align}
F_1(q)&=\frac{E_{1}^{12}}{E_{3}^4}-240q\frac{E_{1}^9E_{9}^3}{E_{3}^4}+243qE_{3}^8-1458q^2\frac{E_{1}^6E_{9}^6}{E_{3}^4}+19683q^4\frac{E_{9}^{12}}{E_{3}^4}, \label{F1-defn} \\
F_2(q)&=\frac{E_{1}^9}{E_{9}}+9q^3\frac{E_{1}^9E_{27}^3}{E_{3}^3E_{9}}+3^6q^2\frac{E_{3}^8E_{9}^3}{E_{1}^3}. \label{F2-defn}
\end{align}
By \cite[Lemma 2.5]{WangIJNT} we have
\begin{align}\label{Jacobi-dissection}
E_{1}^3=E_3a(q^3)-3qE_{9}^3.
\end{align}
Substituting \eqref{3-dissection} and \eqref{Jacobi-dissection} into \eqref{F1-defn}, after simplification, we obtain
\begin{align}
H_{3,0}\left(F_1(q) \right)&=a^4(q^3)+2160q^3\frac{E_{9}^9}{E_{3}^3}a(q^3), \label{H30-F1}\\
H_{3,1}\left(F_1(q) \right)&=243qE_{3}^8-252q\frac{E_{9}^3}{E_{3}}a^3(q^3)+13122q^4\frac{E_{9}^{12}}{E_{3}^4}, \label{H31-F1} \\
H_{3,2}\left(F_1(q) \right)&=756q^2\frac{E_{9}^6}{E_{3}^2}a^2(q^3). \label{H32-F1}
\end{align}
Similarly, substituting \eqref{3-dissection} and \eqref{Jacobi-dissection} into \eqref{F2-defn}, after simplification, we obtain
\begin{align}
H_{3,0}\left(F_2(q) \right)&=\frac{E_{3}^3}{E_{9}}a^3(q^3)+9q^3\frac{E_{27}^3}{E_{9}}a^3(q^3)-27q^3E_{9}^8+2187q^3\frac{E_{9}^9}{E_{3}^3}a(q^3)-243q^6\frac{E_{9}^8E_{27}^3}{E_{3}^3}, \label{H30-F2}\\
H_{3,1}\left(F_2(q) \right)&=-9qE_{3}^2E_{9}^2a^2(q^3)-81q^4\frac{E_{9}^2E_{27}^3}{E_{3}}a^2(q^3)+6561q^4\frac{E_{9}^{12}}{E_{3}^4}, \label{H31-F2} \\
H_{3,2}\left(F_2(q) \right)&=27q^2E_{3}E_{9}^5a(q^3)+729q^2\frac{E_{9}^6}{E_{3}^2}a^2(q^3)+243q^5\frac{E_{9}^5E_{27}^3}{E_{3}^2}a(q^3). \label{H32-F2}
\end{align}
Comparing \eqref{H30-F1} with \eqref{H30-F2}, we see that $H_{3,0}(F_1(q))=H_{3,0}(F_2(q))$ is equivalent to
\begin{align}
\left(a^3(q^3)-27q^3\frac{E_{9}^9}{E_{3}^3} \right)\left(a(q^3)-\frac{E_{3}^3}{E_{9}}-9q^3\frac{E_{27}^3}{E_{9}} \right)=0,
\end{align}
which follows from \eqref{a-new-exp}.

Comparing \eqref{H31-F1} with \eqref{H31-F2} and replacing $q^3$ by $q$, after simplification, we see that $H_{3,1}(F_1(q))=H_{3,1}(F_2(q))$ is equivalent to
\begin{align}
-28E_{3}^3a^3(q)+27E_{1}^{9}+729q\frac{E_{3}^{12}}{E_{1}^3}+9qE_{3}^2E_{9}^3a^2(q)+E_{1}^3E_{3}^2a^2(q)=0.  \label{H31-equiv}
\end{align}
Using \eqref{a-new-exp}, we observe that
\begin{align}
9qE_{3}^2E_{9}^3a^2(q)+E_{1}^3E_{3}^2a^2(q)=E_{3}^3a^2(q)\left(\frac{E_{1}^3}{E_3}+9q\frac{E_{9}^3}{E_{3}} \right)=E_{3}^3a^3(q).
\end{align}
Therefore, \eqref{H31-equiv} is the same as
\begin{align}
-E_{3}^3a^3(q)+E_{1}^9+27q\frac{E_{3}^{12}}{E_{1}^3}=0,
\end{align}
which follows from \eqref{a-cubic}. Hence $H_{3,1}(F_1(q))=H_{3,1}(F_2(q))$.

Next, comparing \eqref{H32-F1} with \eqref{H32-F2}, we know that $H_{3,2}(F_1(q))=H_{3,2}(F_2(q))$ is equivalent to
\begin{align}
E_{3}a(q)=E_{1}^3+9qE_{9}^3,
\end{align}
which follows from \eqref{a-new-exp}.

Thus we have proved that $H_{3,r}(F_1(q))=H_{3,r}(F_{2}(q))$ for $r\in \{0,1,2\}$. Hence \eqref{equivalent} holds and our proof is complete.
\end{proof}
\begin{rem}
The relation \eqref{cphi-9-relate} can also be proved using the theory of modular forms. Let $q=e^{2\pi i \tau}$ with $\mathrm{Im} \, \tau>0$. We denote by $M_{k}(\Gamma_{0}(N))$ the space of modular forms with weight $k$ on $\Gamma_{0}(N)$. It is not difficult to show that both $F_1(q)$ and $F_2(q)$ are in $M_{4}(\Gamma_{0}(27))$. Since $\dim M_{4}(\Gamma_{0}(27))=12$. By checking that the first 12 coefficients of $F_1(q)$ and $F_2(q)$ agree with each other, we immediately prove that $F_1(q)=F_2(q)$.
\end{rem}
Now we are able to prove Theorems \ref{cphi9-thm-specific} and \ref{cphi9-thm-general}.
\begin{proof}[First Proof of Theorems \ref{cphi9-thm-specific} and \ref{cphi9-thm-general}]
From \eqref{cphi-9-relate} we have
\begin{align}
c\phi_{9}(3n)=3c\phi_{3}(9n-1)+c\phi_{3}(n). \label{cphi9-cphi3}
\end{align}
It is then clear that all the congruences in Theorems \ref{cphi9-thm-specific} and \ref{cphi9-thm-general} follow from the congruences in Theorems \ref{thm-specific} and \ref{thm-general}. For example, \eqref{cphi9-cphi3} implies
\begin{align}
&c\phi_{9}\left(3^{2k+1}n+\frac{7\cdot 3^{2k+1}+3}{8} \right)\nonumber \\
=&~3c\phi_{3}\left(3^{2k+2}n +\frac{7\cdot 3^{2k+2}+1}{8} \right)+c\phi_{3}\left(3^{2k}n+\frac{7\cdot 3^{2k}+1}{8} \right).
\end{align}
This together with \eqref{general-1} implies \eqref{cphi9-general-1}. Other congruences can be proved in a similar fashion.
\end{proof}

We can also give another proof without using Kolitsch's relation \eqref{cphi-9-relate}. For this we examine the terms in \eqref{cphi9-gen} one by one. We define five sequences $a_{i}(n)$ ($1\le i \le 5$) by
\begin{align}
\sum_{n=0}^{\infty}a_{1}(n)q^n&=\frac{E_{1}^3}{E_{3}^4}, \\
\sum_{n=0}^{\infty}a_{2}(n)q^n&=q\frac{E_{9}^3}{E_{3}^4}, \\
\sum_{n=0}^{\infty}a_{3}(n)q^n&=q\frac{E_{3}^8}{E_{1}^9},\\
\sum_{n=0}^{\infty}a_{4}(n)q^n&=q^2\frac{E_{9}^6}{E_{1}^3E_{3}^4},\\
\sum_{n=0}^{\infty}a_{5}(n)q^n&=q^4\frac{E_{9}^{12}}{E_{1}^9E_{3}^4}.
\end{align}
Then \eqref{cphi9-gen} implies
\begin{align}\label{cphi9-relation}
c\phi_{9}(n)=a_{1}(n)-240a_{2}(n)+324a_{3}(n)-1458a_{4}(n)+19683a_{5}(n).
\end{align}

By \eqref{Jacobi-dissection} we have
\begin{align}
\sum_{n=0}^{\infty}a_{1}(3n)q^n=\frac{a(q)}{E_{1}^3},
\end{align}
Comparing this with \eqref{gen}, we find that
\begin{align}\label{a-c-relation}
a_{1}(3n)=c\phi_{3}(n).
\end{align}
According to the definition, we have $a_{2}(3n)=0$. Hence
\begin{align}\label{cphi9-new}
c\phi_{9}(3n)=c\phi_{3}(n)+324a_{3}(3n)-1458a_{4}(3n)+19683a_{5}(3n).
\end{align}

Now we establish some results which are analogous to Lemma \ref{Kolitsch-gen}.
\begin{lemma}\label{a-gen}
For $k\ge 1$ we have\\
$(1)$
\begin{align}\label{a3-even}
\sum_{n=0}^{\infty}a_{3}\Big(3^{2k}n+\frac{5\cdot 3^{2k}+3}{8}\Big)q^n=9\frac{T}{q^3E_{3}}\sum_{j=1}^{\infty}y_{2k-1,j}T^{-4j}q^{9j}\xi^{-4j}(\xi+9)
\end{align}
and
\begin{align}\label{a3-odd}
\sum_{n=0}^{\infty}a_{3}\Big(3^{2k+1}n+\frac{7\cdot 3^{2k+1}+3}{8}\Big)q^n =9\frac{1}{qE_{3}}\sum_{j=1}^{\infty}y_{2k,j}T^{-4j}q^{9j}\xi^{-4j-1}(\xi+9),
\end{align}
where
\begin{align}\label{y-1st-defn}
y_{1,j}=6a_{1,j}+243a_{2,j}, \quad j \ge 1
\end{align}
and for $k \ge 1$,
\begin{align}\label{y-defn}
y_{2k,j}=\sum_{i=1}^{\infty}y_{2k-1,i}b_{i,j}, \quad  y_{2k+1,j}=\sum_{i=1}^{\infty}y_{2k,i}a_{i,j}.
\end{align}
$(2)$ \begin{align}\label{a4-even}
\sum_{n=0}^{\infty}a_{4}\Big(3^{2k}n+\frac{5\cdot 3^{2k}+3}{8}\Big)q^n=3q^3\frac{E_{9}^{15}}{E_{3}^{16}}\sum_{j=1}^{\infty}z_{2k-1,j}T^{-4j-4}q^{9j+9}\xi^{-4j}(\xi+9)
\end{align}
and for $k\ge 1$,
\begin{align}\label{a4-odd}
\sum_{n=0}^{\infty}a_{4}\Big(3^{2k+1}n+\frac{7\cdot 3^{2k+1}+3}{8}\Big)q^n =3\frac{E_{3}^{11}}{q^4E_{9}^{12}}\sum_{j=1}^{\infty}z_{2k,j}T^{4-4j}q^{9j-9}\xi^{-4j-1}(\xi+9),
\end{align}
 where
 \[z_{1,1}=21,\quad z_{1,2}=2\cdot 3^6 \cdot 7, \quad z_{1,3}=3^{10}\cdot 13, \quad z_{1,4}=3^{15}, \quad z_{1,j}=0, \quad j \ge 5\]
 and for $k\ge 1$,
 \begin{align}
 z_{2k,j}=\sum_{i=1}^{\infty}z_{2k-1,i}a_{i,j}, \quad  z_{2k+1,j}=\sum_{i=1}^{\infty}z_{2k,i}b_{i,j}.
 \end{align}
$(3)$
 \begin{align}\label{a5-even}
\sum_{n=0}^{\infty}a_{5}\Big(3^{2k}n+\frac{5\cdot 3^{2k}+3}{8}\Big)q^n=9q^3\frac{E_{9}^{15}}{E_{3}^{16}}\sum_{j=1}^{\infty}w_{2k-1,j}T^{-4j-4}q^{9j+9}\xi^{-4j}(\xi+9)
\end{align}
and
\begin{align}\label{a5-odd}
\sum_{n=0}^{\infty}a_{5}\Big(3^{2k+1}n+\frac{7\cdot 3^{2k+1}+3}{8}\Big)q^n =9\frac{E_{3}^{11}}{q^4E_{9}^{12}}\sum_{j=1}^{\infty}w_{2k,j}T^{4-4j}q^{9j-9}\xi^{-4j-1}(\xi+9),
\end{align}
 where
 \[w_{1,j}=6a_{2,j}+243a_{3,j}, \quad j \ge 1\]
 and
 \begin{align}
 w_{2k,j}=\sum_{i=1}^{\infty}w_{2k-1,i}a_{i,j}, \quad  w_{2k+1,j}=\sum_{i=1}^{\infty}w_{2k,i}b_{i,j}.
 \end{align}
\end{lemma}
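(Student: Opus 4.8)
The plan is to prove Lemma~\ref{a-gen} by mimicking Kolitsch's derivation of Lemma~\ref{Kolitsch-gen}, since each of $a_3$, $a_4$, $a_5$ is built from the same modular functions $\xi$, $T$, $S$. First I would express each generating function $\sum a_i(n)q^n$ in terms of $\xi$ and $T$. For instance, $q\,E_3^8/E_1^9 = q\, E_3^8/E_1^9$; using $E_1^3 = q\,\xi\, E_9^3$ and $T = q^3 E_9^3/E_3^3$ one writes $E_3^8/E_1^9 = (E_3^3)^{?}\cdot(\dots)$ as a monomial in $\xi^{-1}$, $T$, $q$, times a fixed prefactor involving only $E_3$ and $E_9$. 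Concretely, since $a(q) = E_1^3/E_3 + 9q E_9^3/E_3$ (Lemma~\ref{a-exp-lem}) and $a(q) = (E_1^3/E_3)(1 + 9/\xi) = (E_1^3/E_3)\xi^{-1}(\xi+9)$, the generating function for $c\phi_3(n)$ — and by the same token for $a_3$ — is a constant times $\xi^{-1}(\xi+9)$ divided by an eta-quotient. So the three families of generating functions in parts (1), (2), (3) should each be recognizable, after pulling out a prefactor, as a power series in $\xi^{-1}$.

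Next I would apply the $2$-dissection operator $H_{3,2}$ (or rather the appropriate $H_{3,r}$ dictated by the residues $\tfrac{5\cdot3^{2k}+3}{8}$ and $\tfrac{7\cdot3^{2k+1}+3}{8}$, which reduce to residue $2$ after stripping powers of $3$) exactly as in Lemmas~\ref{H2-xi-lem} and \ref{H2-lem}. The point is that $H_{3,2}(\xi^{-i})$, $H_{3,2}(9\xi^{-4i-1}+\xi^{-4i})$, and $H_{3,2}(\xi^{-4i+1}+9\xi^{-4i})$ are already computed there in terms of the matrices $(m_{i,j})$, $(a_{i,j})$, $(b_{i,j})$. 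Iterating the dissection $k$ times — alternating between the "$a$-step" and the "$b$-step" as in the recursions defining $x_{i,j}$ — produces the linear recursions defining $y_{i,j}$, $z_{i,j}$, $w_{i,j}$. The initial vectors $y_{1,j}$, $z_{1,j}$, $w_{1,j}$ come from the first dissection step applied to the specific eta-quotient $a_i$: the coefficients $6$ and $243$ in $y_{1,j} = 6a_{1,j} + 243a_{2,j}$ and $w_{1,j} = 6a_{2,j}+243 a_{3,j}$ trace back to the constants in $a(q) = E_1^3/E_3 + 9qE_9^3/E_3$ combined with the shift of index in $\xi$; the explicit numbers $z_{1,1}=21$, $z_{1,2}=2\cdot3^6\cdot7$, etc., should fall out of expanding the relevant monomial in $\xi^{-1}(\xi+9)$ using Lemma~\ref{modular} and collecting finitely many terms (this is exactly the kind of finite computation already done for $a_{1,j}$, $b_{1,j}$).

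The bookkeeping is the bulk of the work: one has to track the prefactors (powers of $q$, $E_3$, $E_9$, and in parts (2),(3) the factor $E_9^{15}/E_3^{16}$ versus $E_3^{11}/E_9^{12}$) through each dissection and verify that after $2k-1$ or $2k$ steps the prefactor is precisely the one stated, and that the surviving sum is $\sum_j (\text{coeff})\, T^{-4j}q^{9j}\xi^{-4j}(\xi+9)$ (or the $\xi^{-4j-1}$ variant on odd steps). Here I would lean on the vanishing relation \eqref{m-vanish}, which guarantees all the sums over $j$ are finite and that the recursions are well defined. I expect the main obstacle to be getting the prefactors and the exponents of $q$ exactly right in parts (2) and (3), where the eta-quotients $q^2 E_9^6/(E_1^3 E_3^4)$ and $q^4 E_9^{12}/(E_1^9 E_3^4)$ are less symmetric than $q E_3^8/E_1^9$; in particular one must check that the parity of the number of dissection steps correctly toggles between the "even" form (with prefactor $q^3 E_9^{15}/E_3^{16}$ and a $T^{-4j-4}q^{9j+9}$ sum) and the "odd" form (with prefactor $E_3^{11}/(q^4 E_9^{12})$ and a $T^{4-4j}q^{9j-9}$ sum). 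Once the base case $k=1$ is checked by direct computation and the inductive step is set up via \eqref{y-defn} and its analogues, the proof closes by induction on $k$, just as in \cite[Theorem~2]{Kolitsch-1}.
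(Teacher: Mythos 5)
Your proposal follows essentially the same route as the paper: write each $\sum a_i(n)q^n$ (after a power-of-$q$ shift) as an eta-prefactor times a power of $\xi^{-1}$, apply the $H_{3,2}$ dissection via Lemmas \ref{H2-xi-lem} and \ref{H2-lem}, compute the base case $k=1$ explicitly (yielding $y_{1,j}=6a_{1,j}+243a_{2,j}$, etc.), and induct on $k$ with the alternating $a$-step/$b$-step recursions, exactly as the paper does (it details only part (1) and notes (2), (3) are analogous). The plan is correct; the remaining work is the routine prefactor bookkeeping you already identified.
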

\begin{proof}
The proofs of (1)-(3) are similar to the proof of Lemma \ref{Kolitsch-gen} given in \cite{Kolitsch-1}. Here we only give the details for (1).

We proceed by induction on $k$. We write
\[\sum_{n=0}^{\infty}a_{3}(n)q^{n+2}=\frac{E_{3}^8}{E_{9}^9}\xi^{-3}.\]
Applying the operator $H_{3,2}$ to both sides, we obtain
\[\sum_{n=0}^{\infty}a_{3}(3n)q^{3n}=\frac{E_{3}^8S}{q^3E_{9}^9}(54T^8q^{-18}+2187T^{12}q^{-27}).  \]
Replacing $q^3$ by $q$, using the fact that
\begin{align}\label{ST}
T(q^{1/3})=\frac{qE_{3}^3}{E_{1}^3}=T^{-1}\xi^{-1}q^3, \quad S(q^{1/3})=\frac{E_{1}^4}{E_{3}^4}(1+9\xi^{-1}),
\end{align}
we obtain
\begin{align}\label{gen-start}
\sum_{n=0}^{\infty}a_{3}(3n)q^n=\frac{9}{E_{3}}\left(6T^{-4}q^9\xi^{-5}+243T^{-8}q^{18}\xi^{-9}\right)(\xi+9).
\end{align}
Applying $H_{3,2}$ to both sides of \eqref{gen-start}, we get
\begin{align}
\sum_{n=0}^{\infty}a_{3}(9n+6))q^{3n+2}&=\frac{9}{E_{3}}\Big(6T^{-4}q^9H_{3,2}(9\xi^{-5}+\xi^{-4})+243T^{-8}q^{18}H_{3,2}(9\xi^{-9}+\xi^{-8})  \Big) \nonumber \\
&=\frac{9S}{qE_{3}}\Big(6T^{-4}q^9 \sum_{j=1}^{\infty}a_{1,j}T^{4+4j}q^{-9-9j}+243T^{-8}q^{18}\sum_{j=1}^{\infty}a_{2,j}T^{8+4j}q^{-18-9j}\Big) \nonumber \\
&=\frac{9S}{qE_{3}}\sum_{j=1}^{\infty}y_{1,j}T^{4j}q^{-9j}. \label{gen-mid}
\end{align}
Now dividing both sides by $q^2$ and replacing $q^3$ by $q$, using \eqref{ST} we obtain
\begin{align}
\sum_{n=0}^{\infty}a_{3}(9n+6)q^n=\frac{9T}{q^3E_{3}}\sum_{j=1}^{\infty}y_{1,j}T^{-4j}q^{9j}\xi^{-4j}(\xi+9).
\end{align}
This proves \eqref{a3-even} for $k=1$.

Suppose \eqref{a3-even} is true for some integer $k\ge 1$. Extracting the terms in which the exponent of $q$ is congruent to 2 modulo 3, we obtain
\begin{align}
\sum_{n=0}^{\infty}a_{3}\Big(3^{2k}(3n+2)+\frac{5\cdot 3^{2k}+3}{8}  \Big)q^{3n+2}=\frac{9T}{q^3E_{3}}\sum_{i=1}^{\infty}y_{2k-1,i}T^{-4i}q^{9i}H_{3,2}(\xi^{-4i+1}+9\xi^{-4i}).
\end{align}
Using Lemma \ref{H2-lem} it follows that
\begin{align}
\sum_{n=0}^{\infty}a_{3}\Big(3^{2k+1}n+\frac{7\cdot 3^{2k+1}+3}{8}\Big)q^{3n+2}=\frac{9TS}{q^4E_{3}}\sum_{j=1}^{\infty}y_{2k,j}T^{4j}q^{-9j}.
\end{align}
Dividing both sides by $q^2$ and then replacing $q^3$ by $q$, using \eqref{ST} we obtain \eqref{a3-odd}.

Next, extracting the terms in which the exponent of $q$ is congruent to 1 modulo 3 on both sides of \eqref{a3-odd}, we have
\begin{align}
&\sum_{n=0}^{\infty}c\phi_{3}\Big(3^{2k+1}(3n+1)+\frac{7\cdot 3^{2k+1}+3}{8} \Big)q^{3n+1} \nonumber \\ &=\frac{9}{qE_{3}}\sum_{i=1}^{\infty}y_{2k,i}T^{-4i}q^{9i}H_{3,2}(9\xi^{-4i-1}+\xi^{-4i}).
\end{align}
Using Lemma \ref{H2-lem} it follows that
\begin{align}
\sum_{n=0}^{\infty}c\phi_{3}\Big(3^{2k+2}n+\frac{5\cdot 3^{2k+2}+3}{8}\Big)q^{3n+1}=\frac{9S}{q^2E_{3}}\sum_{j=1}^{\infty}y_{2k+1,j}T^{4j}q^{-9j}.
\end{align}
Dividing both sides by $q$, then replacing $q^3$ by $q$ and using \eqref{ST}, we obtain
\begin{align}
\sum_{n=0}^{\infty}c\phi_{3}\Big(3^{2k+2}n+\frac{5\cdot 3^{2k+2}+3}{8}\Big)q^{n}=\frac{9T}{q^3E_{3}}\sum_{j=1}^{\infty}y_{2k+1,j}T^{-4j}q^{9j}\xi^{-4j}(\xi+9).
\end{align}
This gives \eqref{a3-even} with $k$ replaced by $k+1$. By induction on $k$ we complete the proof of (1).
\end{proof}

In order to establish congruences modulo arbitrary powers of 3 for $c\phi_{9}(n)$, we need to examine the 3-adic orders of $y_{i,j}$, $z_{i,j}$ and $w_{i,j}$. The following information about the 3-adics orders of $a_{i,j}$ and $b_{i,j}$ will be helpful.
\begin{lemma}\label{ab-adic}
(Cf. \cite[Lemma 8]{Kolitsch-1}.) We have
\[\pi(a_{i,j})\ge \left[\frac{9j-3i-3}{2}\right], \quad \pi(b_{i,j})\ge \left[\frac{9j-3i}{2}\right].\]
\end{lemma}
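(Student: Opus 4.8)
The plan is to reduce both inequalities to a single estimate for the $3$-adic orders of the entries of the master matrix $(m_{i,j})$, namely the claim that
\[
\pi(m_{i,j})\ \ge\ \left[\frac{9j-3i-3}{2}\right]\qquad\text{for all }i,j\ge 1,
\]
and then to read off the bounds for $a_{i,j}$ and $b_{i,j}$ from the defining relations \eqref{a-defn} and \eqref{b-defn}. When the right-hand side is negative the inequality is vacuous, and by \eqref{m-vanish} the only interesting range is $1\le i\le 3j-1$. This is essentially the computation carried out by Kolitsch in \cite{Kolitsch-1}, which we recall.

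First I would prove the displayed $m$-estimate by induction on the column index $j$. The case $j=1$ is immediate from $m_{1,1}=3$, $m_{2,1}=1$ and $m_{i,1}=0$ for $i\ge 3$. For $i\in\{1,2,3\}$ the entries are given outright by the initial data, so a direct check of the finitely many nonzero values $m_{2,2}=3^4$, $m_{3,2}=2\cdot 3^3$, $m_{3,3}=3^7$ disposes of those three rows at every column. For $i\ge 4$ and $j\ge 2$ I would apply the recursion $m_{i,j}=m_{i-3,j-1}+9m_{i-2,j-1}+27m_{i-1,j-1}$ together with the inductive hypothesis at column $j-1$: a short computation shows that each of the three summands has $3$-adic order at least $\left[\frac{9j-3i-3}{2}\right]$, the extra factors $9$ and $27$ being absorbed by the elementary identity $\left[\frac{x-2c}{2}\right]=\left[\frac{x}{2}\right]-c$ and the inequality $\left[\frac{x}{2}\right]-\left[\frac{x-3}{2}\right]\ge 1$. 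Taking the minimum over the three terms yields the estimate at column $j$.

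Granting the $m$-estimate, the two bounds of the lemma follow mechanically. For $a_{i,j}=9m_{4i+1,i+j}+m_{4i,i+j}$ the second summand already has order $\ge\left[\frac{9j-3i-3}{2}\right]$, while the first has order $2+\pi(m_{4i+1,i+j})\ge\left[\frac{9j-3i}{2}\right]-1\ge\left[\frac{9j-3i-3}{2}\right]$; hence $\pi(a_{i,j})\ge\left[\frac{9j-3i-3}{2}\right]$. For $b_{i,j}=m_{4i-1,i+j}+9m_{4i,i+j}$ the first summand has order $\ge\left[\frac{9j-3i}{2}\right]$ and the second has order $2+\pi(m_{4i,i+j})\ge\left[\frac{9j-3i-3}{2}\right]+2\ge\left[\frac{9j-3i}{2}\right]$; hence $\pi(b_{i,j})\ge\left[\frac{9j-3i}{2}\right]$.

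The only genuinely delicate point is the choice of the intermediate $m$-estimate: it must be weak enough to be preserved by the three-term recursion---where the factor $27$ costs three powers of $3$ and the index shift to $m_{i-1,j-1}$ only partially compensates---yet strong enough that the weaker of the two summands defining $a_{i,j}$ and $b_{i,j}$ still clears the stated target. The symmetric bound $\left[\frac{9j-3i-3}{2}\right]$ threads this needle exactly, and once it is in place everything reduces to bookkeeping with the floor function.
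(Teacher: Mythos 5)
Your proposal is correct: the intermediate estimate $\pi(m_{i,j})\ge\left[\frac{9j-3i-3}{2}\right]$ does survive the three-term recursion (the terms $m_{i-3,j-1}$ and $27m_{i-1,j-1}$ meet the target exactly, while $9m_{i-2,j-1}$ clears it because $\left[\frac{x-3}{2}\right]\ge\left[\frac{x}{2}\right]-2$), the finitely many nonzero entries in rows $1,2,3$ satisfy the bound with equality or better, zero entries are covered by the convention $\pi(0)=\infty$, and your floor-function deductions for $a_{i,j}$ and $b_{i,j}$ from \eqref{a-defn} and \eqref{b-defn} check out (I also verified consistency against the listed values $a_{1,j}$, $b_{1,j}$). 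Note that the paper itself supplies no proof of this lemma, quoting it from Kolitsch's Lemma 8, so your argument is essentially a reconstruction of the cited source's column-by-column induction on the $3$-adic orders of the $m_{i,j}$ rather than a genuinely different route.
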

\begin{lemma}\label{yzw-adic}
For any integer $k\ge 1$ we have \\
$(1)$ \begin{align*}
\pi(y_{2k-1,j})\ge 4k-2+\left[\frac{9j-8}{2} \right], \quad \pi(y_{2k,j})\ge 4k+1+\left[\frac{9j-9}{2}\right].
\end{align*}
$(2)$ \begin{align*}
\pi(z_{2k-1,j})\ge 4k-3+\left[\frac{9j-8}{2}\right], \quad \pi(z_{2k,j})\ge 4k+\left[\frac{9j-9}{2}\right].
\end{align*}
$(3)$ \begin{align*}
\pi(w_{2k-1,j})\ge 4k-3+\left[\frac{9j-8}{2}\right], \quad \pi(w_{2k,j})\ge 4k+\left[\frac{9j-9}{2}\right].
\end{align*}
\end{lemma}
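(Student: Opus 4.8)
The plan is to prove all three parts by induction on $k$, exactly mirroring Kolitsch's proof of Lemma \ref{Kolitsch-adic} (\cite[Lemma 9]{Kolitsch-1}). The engine in every case is the same: the recursions for $y_{i,j}$, $z_{i,j}$, $w_{i,j}$ in Lemma \ref{a-gen} are convolutions against the matrices $(a_{i,j})$ and $(b_{i,j})$, whose $3$-adic orders are bounded in Lemma \ref{ab-adic}, so one feeds the inductive lower bound through a convolution and extracts a floor estimate. First I would record the base cases by direct computation from the explicit initial rows: for part (1), $y_{1,j}=6a_{1,j}+243a_{2,j}$ together with the listed values of $a_{1,j},a_{2,j}$ gives $\pi(y_{1,j})\ge 1+\pi(a_{1,j})$ for the relevant $j$ (and $y_{1,j}=0$ for large $j$ by \eqref{m-vanish}), which one checks against the claimed bound $4\cdot1-2+[\tfrac{9j-8}{2}]$; similarly $z_{1,j}$ and $w_{1,j}$ are read off from their stated initial values and from $w_{1,j}=6a_{2,j}+243a_{3,j}$.

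For the inductive step in part (1): assuming $\pi(y_{2k-1,i})\ge 4k-2+[\tfrac{9i-8}{2}]$, I would use $y_{2k,j}=\sum_i y_{2k-1,i}b_{i,j}$ together with $\pi(b_{i,j})\ge[\tfrac{9j-3i}{2}]$ to get
\[
\pi(y_{2k,j})\ge \min_{i\ge 1}\Big(4k-2+\Big[\tfrac{9i-8}{2}\Big]+\Big[\tfrac{9j-3i}{2}\Big]\Big).
\]
The summand inside, dropping the floors, is $4k-2+\tfrac{9i-8}{2}+\tfrac{9j-3i}{2}=4k-2+\tfrac{9j}{2}+3i-4$, which is increasing in $i$, so the minimum is attained near $i=1$; a short analysis of how the two floors interact (their combined loss is at most $1$ relative to the unfloored sum, and one checks the parity so the target bound $4k+1+[\tfrac{9j-9}{2}]$ comes out) finishes this half. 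Then $y_{2k+1,j}=\sum_i y_{2k,i}a_{i,j}$ with $\pi(a_{i,j})\ge[\tfrac{9j-3i-3}{2}]$ is handled the same way to recover $4(k+1)-2+[\tfrac{9j-8}{2}]=4k+2+[\tfrac{9j-8}{2}]$. Parts (2) and (3) are structurally identical: in part (2) the roles of $a$ and $b$ in the recursion are swapped relative to part (1), which accounts for the one-unit shift $(4k-3,\,4k)$ versus $(4k-2,\,4k+1)$, and part (3) has the same recursion pattern as part (2) with the same initial exponent $\pi(w_{1,j})\ge\pi(z_{1,j})$, so its bounds coincide with those of part (2).

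The main obstacle is purely bookkeeping rather than conceptual: one must be careful about the interaction of the two greatest-integer functions when propagating the bound through the convolution, since $[\alpha]+[\beta]$ can be either $[\alpha+\beta]$ or $[\alpha+\beta]-1$ depending on the fractional parts of $\alpha=\tfrac{9i-8}{2}$ (resp. $\tfrac{9i-9}{2}$) and $\beta=\tfrac{9j-3i}{2}$ (resp. $\tfrac{9j-3i-3}{2}$). In each case the worst term is $i=1$ (and, where it matters, $i=2$), and since $a_{i,j}=b_{i,j}=0$ for $i$ large by \eqref{m-vanish}, only finitely many $i$ need to be inspected for any given $j$; verifying the parity works out in one's favour at $i=1,2$ is exactly the content of Kolitsch's argument for Lemma \ref{Kolitsch-adic}, and we simply repeat it with the shifted initial exponents. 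No new idea beyond that is required, so I would present the base cases explicitly and then say the inductive step follows verbatim as in \cite[Lemma 9]{Kolitsch-1}.
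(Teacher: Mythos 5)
Your treatment of part (1) is correct and is essentially the paper's own argument: induction on $k$, the convolution estimate $\pi(y_{2k,j})\ge\min_{i\ge1}\bigl(4k-2+\bigl[\tfrac{9i-8}{2}\bigr]+\bigl[\tfrac{9j-3i}{2}\bigr]\bigr)$ coming from Lemma \ref{ab-adic}, and monotonicity in $i$ so that the minimum sits at $i=1$.

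The gap is in your dispatch of parts (2) and (3). You take the recursions for $z_{i,j}$ and $w_{i,j}$ literally as printed in Lemma \ref{a-gen} (the $a$-matrix at the even step, the $b$-matrix at the odd step, i.e.\ ``swapped'' relative to $y$) and assert that this swap accounts for the shift from $(4k-2,4k+1)$ to $(4k-3,4k)$. That is not so: with the swapped recursion the inductive step does not close, since at the extremal index $i=1$ the passage $z_{2k-1}\to z_{2k}$ only yields $4k-3+\bigl[\tfrac{9j-6}{2}\bigr]$, which is $1$ (for $j$ even) or $2$ (for $j$ odd) below the target $4k+\bigl[\tfrac{9j-9}{2}\bigr]$; worse, the stated bounds are actually false for the literal recursion, e.g.\ $z_{2,1}=z_{1,1}a_{1,1}+z_{1,2}a_{2,1}=21\cdot 21+10206\cdot 1=10647=3^2\cdot 7\cdot 13^2$ has $3$-adic order $2$, not $\ge 4$. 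What really happens is that the recursions for $z$ and $w$ alternate in the \emph{same} order as for $y$: to pass from \eqref{a4-even} to \eqref{a4-odd} one applies $H_{3,2}$ to $\xi^{-4j+1}+9\xi^{-4j}$, i.e.\ the $b$-identity of Lemma \ref{H2-lem}, so $z_{2k,j}=\sum_i z_{2k-1,i}b_{i,j}$ and $z_{2k+1,j}=\sum_i z_{2k,i}a_{i,j}$ (the order printed in Lemma \ref{a-gen}(2),(3) is evidently a misprint), and likewise for $w$. With that order the induction is verbatim the one in part (1), and the one-unit shift comes solely from the smaller initial orders ($\pi(z_{1,j})=\pi(a_{1,j})\ge 1+\bigl[\tfrac{9j-8}{2}\bigr]$ versus $\pi(y_{1,j})\ge 2+\bigl[\tfrac{9j-8}{2}\bigr]$), not from any exchange of $a$ and $b$. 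Finally, your claim $\pi(w_{1,j})\ge\pi(z_{1,j})$, on which part (3) rests, is not automatic: Lemma \ref{ab-adic} gives only $5$ for $w_{1,2}=6a_{2,2}+243a_{3,2}$, so one needs the actual values ($a_{2,2}=2^2\cdot 3^5\cdot 7$, $a_{3,2}=3^4\cdot 13$) to reach $6$, and a direct computation gives $\pi(w_{1,4})=\pi(6a_{2,4}+243a_{3,4})=14$, which is strictly less than $\pi(z_{1,4})=15$; so the base case of (3) must be checked on its own and cannot be declared to coincide with that of (2). In short, parts (2) and (3) require correcting the recursion order and verifying their base cases individually; as written, your induction for them would break down.
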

\begin{proof}
We only prove (1) in details. The proofs of (2) and (3) are similar.

To prove (1) we use induction on $k$. For $k=1$, by \eqref{y-1st-defn} and Lemma \ref{ab-adic} we find that
\begin{align}
\pi(y_{1,j})\ge \min \left\{1+\left[\frac{9j-6}{2} \right], 5+\left[\frac{9j-9}{2} \right]\right\}= 2+\left[\frac{9j-8}{2} \right].
\end{align}
Hence the first inequality in (1) is true for $k=1$.

Suppose the first inequality in (1) is true for some $k \ge 1$. From \eqref{y-defn} we have
\begin{align}\label{ineq-start}
\pi(y_{2k,j})\ge \min\limits_{i\ge 1} \pi(y_{2k-1,i}b_{i,j}) \ge \min\limits_{i\ge 1} \Big(4k-2+\left[\frac{9i-8}{2}\right]+\left[\frac{9j-3i}{2}\right]\Big).
\end{align}
Let
\[f(i,j)=\left[\frac{9i-8}{2}\right]+\left[\frac{9j-3i}{2}\right].\]
If we increase $i$ by 1, the value of $\left[\frac{9i-8}{2}\right]$ increases by at least 4 and the value of $\left[\frac{9j-3i}{2}\right]$ decreases by at most 2. Thus $f(i,j)$ is increasing with respect to $i$. So the minimal value of $f(i,j)$ occurs when $i=1$. Therefore, from \eqref{ineq-start} we obtain
\begin{align*}
\pi(y_{2k,j})\ge 4k-2+\left[\frac{9j-3}{2}\right]=4k+1+\left[\frac{9j-9}{2}\right].
\end{align*}
Hence the second inequality in (1) is true for $k$.

Similarly, we have
\begin{align*}
\pi(y_{2k+1,j})&\ge \min\limits_{i\ge 1} \Big(\pi(y_{2k,i})+\pi(a_{i,j})  \Big)\\
&\ge \min_{i\ge 1} 4k+1+\left[\frac{9i-9}{2}\right]+\left[\frac{9j-3i-3}{2}\right] \\
&\ge 4k+2+\left[\frac{9j-8}{2}\right].
\end{align*}
This proves the first inequality of (1) for $k+1$. By induction we know (1) is true for any $k\ge 1$.
\end{proof}

\begin{lemma}\label{ak-thm}
For any integers $k\ge 1$ and $n\ge 0$ we have
\begin{align}
a_{3}\Big(3^{2k}n+\frac{5\cdot 3^{2k}+3}{8}\Big) &\equiv 0 \pmod{3^{4k}}, \label{a3-even-cong} \\
a_{3}\Big(3^{2k+1}n+\frac{7\cdot 3^{2k+1}+3}{8}\Big) &\equiv 0 \pmod{3^{4k+3}}. \label{a3-odd-cong}\\
a_{4}\Big(3^{2k}n+\frac{5\cdot 3^{2k}+3}{8}\Big) &\equiv 0 \pmod{3^{4k-2}}, \label{a4-even-cong}\\
a_{4}\Big(3^{2k+1}n+\frac{7\cdot 3^{2k+1}+3}{8}\Big) &\equiv 0 \pmod{3^{4k+1}}. \label{a4-odd-cong}\\
a_{5}\Big(3^{2k}n+\frac{5\cdot 3^{2k}+3}{8}\Big) &\equiv 0 \pmod{3^{4k-1}}, \label{a5-even-cong}\\
a_{5}\Big(3^{2k+1}n+\frac{7\cdot 3^{2k+1}+3}{8}\Big) &\equiv 0 \pmod{3^{4k+2}}. \label{a5-odd-cong}
\end{align}
\end{lemma}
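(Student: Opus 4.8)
The plan is to read off all six congruences in Lemma~\ref{ak-thm} directly from the generating function formulas of Lemma~\ref{a-gen} together with the $3$-adic estimates of Lemma~\ref{yzw-adic}, in exactly the same way that Kolitsch's congruences \eqref{cphi3-cong1}--\eqref{cphi3-cong2} follow from Lemmas~\ref{Kolitsch-gen} and~\ref{Kolitsch-adic}. The point is that in each of the six expansions the ``small'' modular factor in front of the sum (such as $9T/(q^3E_3)$ or $3q^3 E_9^{15}/E_3^{16}$) is a power series in $q$ with $3$-integral coefficients, so the $3$-adic order of every coefficient of the left-hand side is bounded below by the minimum over $j$ of $\pi$ of the corresponding coefficient $y_{i,j}$, $z_{i,j}$ or $w_{i,j}$, plus the $3$-adic valuation of the constant factor that has been pulled out.

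First I would treat $a_3$. Taking $k$ odd/even cases of \eqref{a3-even}: the prefactor $9T/(q^3E_3)$ contributes a valuation of $2$ (from the $9$), the term $T^{-4j}q^{9j}\xi^{-4j}(\xi+9)$ is a $3$-integral power series, and by Lemma~\ref{yzw-adic}(1) we have $\pi(y_{2k-1,j})\ge 4k-2+\lfloor(9j-8)/2\rfloor\ge 4k-2$ for all $j\ge 1$, with the minimum attained at $j=1$. Hence every coefficient on the left of \eqref{a3-even} has $3$-adic order at least $2+(4k-2)=4k$, which is \eqref{a3-even-cong}. Similarly \eqref{a3-odd} has prefactor $9/(qE_3)$ of valuation $2$ and $\pi(y_{2k,j})\ge 4k+1$ by Lemma~\ref{yzw-adic}(1), giving order at least $2+4k+1=4k+3$, which is \eqref{a3-odd-cong}. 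The cases of $a_4$ and $a_5$ are identical in structure: for \eqref{a4-even} the prefactor $3q^3E_9^{15}/E_3^{16}$ has valuation $1$ and $\pi(z_{2k-1,j})\ge 4k-3$ by Lemma~\ref{yzw-adic}(2), so the order is at least $1+(4k-3)=4k-2$, proving \eqref{a4-even-cong}; \eqref{a4-odd} has prefactor $3E_3^{11}/(q^4E_9^{12})$ of valuation $1$ with $\pi(z_{2k,j})\ge 4k$, giving $4k+1$ and hence \eqref{a4-odd-cong}. For $a_5$ the prefactors $9q^3E_9^{15}/E_3^{16}$ and $9E_3^{11}/(q^4E_9^{12})$ have valuation $2$, and $\pi(w_{2k-1,j})\ge 4k-3$, $\pi(w_{2k,j})\ge 4k$ by Lemma~\ref{yzw-adic}(3), so the orders are at least $2+(4k-3)=4k-1$ and $2+4k=4k+2$, which are \eqref{a5-even-cong} and \eqref{a5-odd-cong}.

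The only subtlety — and the one step that needs a word of care rather than a routine invocation — is the claim that each ``prefactor'' series has $3$-integral coefficients and carries exactly the stated power of $3$; this is immediate from the definitions $\xi=E_1^3/(qE_9^3)$, $T=q^3E_9^3/E_3^3$, $S=E_3^4/E_9^4+9q^3E_3E_{27}^3/E_9^4$, since $E_k^{\pm1}\in\mathbb{Z}[[q]]$ with constant term $1$, and one checks that after clearing the explicit monomial powers of $q$ each expression such as $9T/(q^3E_3)$, $3q^3E_9^{15}/E_3^{16}$, etc., is $9$ (resp.\ $3$) times a power series in $\mathbb{Z}[[q]]$. Once this is granted, the six congruences of Lemma~\ref{ak-thm} are simply the valuation bound ``$\pi(\text{coefficient})\ge \pi(\text{prefactor constant})+\min_j\pi(\text{index coefficient})$'' applied six times, using Lemma~\ref{yzw-adic}, with no further computation required. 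I expect no real obstacle here: the work is entirely front-loaded into Lemmas~\ref{a-gen} and~\ref{yzw-adic}, which have already been established.
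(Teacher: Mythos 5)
Your proposal is correct and matches the paper's proof, which simply states that the lemma follows directly from Lemmas \ref{a-gen} and \ref{yzw-adic}; you have just spelled out the routine valuation bookkeeping (prefactor valuation plus the $j=1$ minimum of the bounds on $\pi(y_{i,j})$, $\pi(z_{i,j})$, $\pi(w_{i,j})$) that the paper leaves implicit.
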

\begin{proof}
The assertions follow directly from Lemmas \ref{a-gen} and \ref{yzw-adic}.
\end{proof}

\begin{proof}[Proofs of Theorems \ref{cphi9-thm-specific} and \ref{cphi9-thm-general}]
Recall from \eqref{cphi9-new} that
\begin{align}\label{cphi9-repn}
c\phi_{9}(3n)=c\phi_{3}(n)+4\cdot 3^4 a_{3}(3n)-2\cdot 3^6a_{4}(3n)+3^9a_{5}(3n).
\end{align}
Congruences \eqref{cphi9-specific-1} and \eqref{cphi9-specific-2}  follow from \eqref{thmcong-1} and \eqref{thmcong-2}, respectively.

From \eqref{a3-odd-cong}, \eqref{a4-odd-cong}, \eqref{a5-odd-cong} and \eqref{cphi9-repn} we deduce that for any $k\ge 1$,
\begin{align}\label{final-relation1}
c\phi_{9}\Big(3\big(3^{2k}n+\frac{7\cdot 3^{2k}+1}{8} \big)\Big)\equiv c\phi_{3}\Big(3^{2k}n+\frac{7\cdot 3^{2k}+1}{8} \Big) \pmod{3^{4k+7}}.
\end{align}
Similarly, from \eqref{a3-even-cong}, \eqref{a4-even-cong}, \eqref{a5-even-cong} and \eqref{cphi9-repn} we deduce that for any $k\ge 0$,
\begin{align}\label{final-relation2}
c\phi_{9}\Big(3\big(3^{2k+1}n+\frac{5\cdot 3^{2k+1}+1}{8}\big)\Big)\equiv c\phi_{3}\Big(3^{2k+1}n+\frac{5\cdot 3^{2k+1}+1}{8}\Big) \pmod{3^{4k+8}}.
\end{align}
Theorem \ref{cphi9-thm-specific} then follows from \eqref{final-relation1}, \eqref{final-relation2} and Theorem \ref{thm-specific}. For example, let $k=0$ in \eqref{final-relation2}. We have
\begin{align}\label{mid-relation}
c\phi_{9}(9n+6) \equiv c\phi_{3}(3n+2) \pmod{3^8}.
\end{align}
Replacing $n$ by $3n+1$ and $3n+2$ in \eqref{mid-relation}, we obtain \eqref{cphi9-specific-3} and \eqref{cphi9-specific-4}, respectively. Congruences \eqref{cphi9-specific-5}--\eqref{cphi9-specific-10} can be proved similarly.

Congruence \eqref{cphi9-general-1} follows from \eqref{final-relation1} and \eqref{general-1}. Congruence \eqref{cphi9-general-2} follows from \eqref{final-relation2} and  \eqref{general-2}. Congruence \eqref{cphi9-general-3} follows after replacing $n$ by $3n+2$ in \eqref{final-relation1} and using \eqref{general-3}. Congruence \eqref{cphi9-general-4} follows after replacing $n$ by $3n+1$ in \eqref{final-relation2} and using  \eqref{general-4}.
\end{proof}
\begin{rem}
Although the first proof is much shorter, the second proof reveals more information. It establishes congruences modulo powers of 3 for the sequences $a_{i}(n)$ ($i\in \{3,4,5\}$), which may be useful for people who are interested in these sequences.
\end{rem}

\end{document}